\documentclass[12pt]{article}
\usepackage{amsmath,amssymb,amsbsy,amsfonts,amsthm}

\begin{document}

\newtheorem{theorem}{Theorem}
\newtheorem{lemma}[theorem]{Lemma}
\newtheorem{cor}[theorem]{Corollary}
\newtheorem{prop}[theorem]{Proposition}

\newcommand{\comm}[1]{\marginpar{%
\vskip-\baselineskip 
\raggedright\footnotesize
\itshape\hrule\smallskip#1\par\smallskip\hrule}}

\def\cA{{\mathcal A}}
\def\cB{{\mathcal B}}
\def\cC{{\mathcal C}}
\def\cD{{\mathcal D}}
\def\cE{{\mathcal E}}
\def\cF{{\mathcal F}}
\def\cG{{\mathcal G}}
\def\cH{{\mathcal H}}
\def\cI{{\mathcal I}}
\def\cJ{{\mathcal J}}
\def\cK{{\mathcal K}}
\def\cL{{\mathcal L}}
\def\cM{{\mathcal M}}
\def\cN{{\mathcal N}}
\def\cO{{\mathcal O}}
\def\cP{{\mathcal P}}
\def\cQ{{\mathcal Q}}
\def\cR{{\mathcal R}}
\def\cS{{\mathcal S}}
\def\cT{{\mathcal T}}
\def\cU{{\mathcal U}}
\def\cV{{\mathcal V}}
\def\cW{{\mathcal W}}
\def\cX{{\mathcal X}}
\def\cY{{\mathcal Y}}
\def\cZ{{\mathcal Z}}

\def\C{\mathbb{C}}
\def\F{\mathbb{F}}
\def\K{\mathbb{K}}
\def\Z{\mathbb{Z}}
\def\R{\mathbb{R}}
\def\Q{\mathbb{Q}}
\def\N{\mathbb{N}}

\def\({\left(}
\def\){\right)}
\def\[{\left[}
\def\]{\right]}
\def\<{\langle}
\def\>{\rangle}

\def\e{e}

\def\eq{\e_q}
\def\eT{\e_T}

\def\fl#1{\left\lfloor#1\right\rfloor}
\def\rf#1{\left\lceil#1\right\rceil}
\def\mand{\qquad\mbox{and}\qquad}

\title{\bf On a bound of Heath-Brown for Dirichlet $L$-functions on the critical line}

\date{ }
\author{
{\sc   Bryce Kerr} \\
{Department of Computing, Macquarie University} \\
{Sydney, NSW 2109, Australia} \\
{\tt  bryce.kerr@mq.edu.au}}

\date{}

\maketitle

\begin{abstract}
Let  $\chi$ a primitive character$\pmod q$ and consider the Dirichlet $L$-function $$L(s,\chi)=\sum_{n=1}^{\infty}\frac{\chi(n)}{n^s}.$$
We give a new proof of an upper bound of Heath-Brown for $|L(s,\chi)|$ on the critical line $s=1/2+it$.\end{abstract}


\section{Introduction}
For integer $q$, let $\chi$ be a primitive character$\pmod q$. We consider the order of the Diriclet $L$-function
$$L(s,\chi)=\sum_{n=1}^{\infty}\frac{\chi(n)}{n^s},$$
on the critical line $s=1/2+it$. This problem has been considered in a number of previous works and is essentially equivalent to bounding sums of the form
\begin{equation}
\label{main sums}
\sum_{M<n\le M+N}\chi(n)n^{it}.
\end{equation}
Burgess~\cite{Burg3} showed that for bounded $t$
$$L(1/2+it,\chi)\ll q^{3/16+o(1)},$$
which was improved  by Heath-Brown in~\cite{HB1} by developing a $q$-analouge of the van der Corput method, to obtain the bound 
\begin{equation}
\label{HHB1}
L(1/2+it,\chi)\ll \left(q^{1/4}+(qt)^{1/6}\right)(qt)^{o(1)}.
\end{equation}
Later in~\cite{HB2}, Heath-Brown  adapted the Burgess method~\cite{Burg3} to deal with sums of the form~\eqref{main sums}, leading to the bound
\begin{equation}
\label{HHB2}
L(1/2+it,\chi) \ll (qt)^{3/16+o(1)},
\end{equation}
which gives an improvement on~\eqref{HHB1} for values of $t\le q^{1/3}$.
When $q$ is prime, Huxley and Watt~\cite{HuWt} extended the Bombieri and Iwaniec method~\cite{BoIw} to deal with sums of the form~\eqref{main sums}. This gives an improvement on~\eqref{HHB1} and~\eqref{HHB2} when $t$ is not too small, although  they remark it does not seem possible to use the method of~\cite{HuWt} to improve on the results of Heath-Brown when $t\le q^{15}$. We give a new proof of the bound~\eqref{HHB2}. As in~\cite{HB2}, we reduce the problem to bounding the double mean value 
\begin{align}
\label{double mean value1}
\int_{A}^{B}\sum_{\lambda=1}^{q}\max_{V/2<Q\le V}\left|\sum_{V/2<v\le Q}\chi(\lambda+v)(x+v)^{it}e^{2\pi i \alpha v} \right|^4dx.
\end{align}
This is done using  some ideas  of Chang~\cite{Chang1}, Friedlander and Iwaniec~\cite{FI} and Heath-Brown~\cite{HB2}. We note that although this part of the argument is different to that of Heath-Brown, we still rely on some key ideas from~\cite{HB2}, mainly the use of the Sobolev-Gallagher inequality (see Lemma~\ref{sg}).
To bound the double mean value~\eqref{double mean value1}, we follow the argument of  Heath-Brown~\cite{HB2} to reduce the problem to bounding
\begin{align*}
\int_{A}^{B}\sum_{\lambda=1}^{q}\left|\sum_{V/2<v\le V/2+C}\chi(\lambda+v)(x+v)^{it}e^{2\pi i \alpha v} \right|^4dx,
\end{align*}
 which we dealt with differently to Heath-Brown~\cite{HB2}, using results of Burgess~\cite{Burg1}, Huxley~\cite{Hu} and Phong, Stein and Sturm~\cite{PSS}. 
\section{Main Results}
\begin{theorem}
\label{main1}
For integer $q$, let $\chi$ be a primitive character$\pmod q$. Let $t\ge 1$ and let the integers $M$ and $N$ satisfy 
$$q^{1/4}t^{1/4}\le N\le q^{5/8}t^{1/8},$$
and $$N\le M \le 2N.$$  Then we have
$$\left|\sum_{M<n\le M+N}\chi(n)n^{it}\right| \le N^{1/2}(qt)^{3/16+o(1)}.$$
\end{theorem}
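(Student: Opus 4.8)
The plan is to bound the character sum $\sum_{M<n\le M+N}\chi(n)n^{it}$ by reducing it, via the Burgess-type shifting argument, to the double mean value in~\eqref{double mean value1}, and then to estimate that mean value by the method sketched in the introduction. Let me outline how I would carry this out.

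First I would introduce a shift parameter. Writing $S=\sum_{M<n\le M+N}\chi(n)n^{it}$, I would replace $n$ by $n+\lambda v$ (or more precisely exploit the multiplicativity $\chi(n)=\chi(\lambda)^{-1}\chi(\lambda n)$ after a suitable substitution, as in Burgess~\cite{Burg3} and Heath-Brown~\cite{HB2}) for ranges of auxiliary variables $\lambda$ and $v$. The point of this step is to convert the single sum over a short interval into an average over translates; averaging over $\lambda$ in a residue class set and over $v$ in a short interval produces, after separating the oscillatory factor $n^{it}$ and applying Hölder's inequality to pass to a fourth moment, a quantity of the shape of~\eqref{double mean value1}. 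Here the constraints $q^{1/4}t^{1/4}\le N$ and $N\le q^{5/8}t^{1/8}$ are exactly what is needed so that the shift is admissible and the arithmetic factors balance against the analytic savings; I would verify these numerology constraints carefully, since the exponent $3/16$ emerges precisely from optimizing the lengths of the $\lambda$ and $v$ ranges against $N$, $q$ and $t$.

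\smallskip

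The main technical input is the Sobolev--Gallagher inequality (Lemma~\ref{sg}), which I would use to handle the $n^{it}$ factor: the continuous integral $\int_A^B$ in~\eqref{double mean value1} arises from smoothing the discrete variable so that the analytic weight $(x+v)^{it}$ can be treated by stationary-phase-type or large-sieve-type estimates, rather than interfering with the character sum. Having set up the fourth moment, I would follow Heath-Brown's reduction to the short inner sum $\sum_{V/2<v\le V/2+C}\chi(\lambda+v)(x+v)^{it}e^{2\pi i\alpha v}$, at which point the arithmetic is governed by counting solutions to the relevant congruences. This is where I would deploy the estimates of Burgess~\cite{Burg1}, Huxley~\cite{Hu}, and Phong--Stein--Sturm~\cite{PSS} on point counts / multiplicative-energy bounds, which give better control than the elementary divisor-function bounds used in~\cite{HB2}.

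\smallskip

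I expect the main obstacle to be the estimation of the fourth moment of the short character sum after expanding the fourth power: this produces a count of solutions to a system involving products $\chi(\lambda+v_1)\overline{\chi(\lambda+v_2)}\chi(\lambda+v_3)\overline{\chi(\lambda+v_4)}$ together with the phase $(x+v_i)^{it}$, and one must separate the diagonal contribution (which gives the main term $N^{1/2}(qt)^{o(1)}$ scale) from the off-diagonal, bounding the latter using the cited point-counting results and Weil-type completion of the character sum over $\lambda\bmod q$. The delicate point is ensuring that the analytic phase and the arithmetic character sum can be handled simultaneously without losing in either; this is precisely why the continuous integral and the Sobolev--Gallagher step are essential, and why the final optimization of the ranges yields the clean exponent $3/16$.
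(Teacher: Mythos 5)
Your outline reproduces the paper's high-level strategy (Burgess-type shift, Fourier separation of the cutoff, Sobolev--Gallagher, fourth moment, complete-sum estimates), but as a proof it has two concrete gaps. First, the shift mechanism is misidentified. The substitution is not $n\mapsto n+\lambda v$ with $\lambda$ an averaging parameter: the paper averages over $n\mapsto n+uv$ with $u$ coprime to $q$ in a dyadic range of length $U=\lfloor Nt^{-1/4}q^{-1/4}\rfloor$ and $v$ in a range of length $V=\lfloor t^{1/4}q^{1/4}\rfloor$ (so that $UV\le N$, which needs the lower bound $N\ge q^{1/4}t^{1/4}$ to make $U\ge 1$), writes $\chi(n+uv)=\chi(u)\chi(nu^{*}+v)$ and $(n+uv)^{it}=u^{it}(n/u+v)^{it}$, and the variable $\lambda$ appearing in~\eqref{double mean value1} is the \emph{complete} residue $nu^{*}\bmod q$; its full range $1\le\lambda\le q$ is exactly what makes the complete character sum estimates of Lemma~\ref{Burgess} applicable after expanding the fourth power. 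The price of collecting $nu^{*}$ into residues, paid through H\"older's inequality, is the multiplicative energy, i.e.\ the number of solutions of $n_1u_1\equiv n_2u_2\pmod q$ (Lemma~\ref{congruence}); the hypothesis $N\le q^{5/8}t^{1/8}$ is used precisely here, since it gives $NU\le q$ and hence the energy bound $NUq^{o(1)}$. Your proposal never fixes $U$, $V$, or the smoothing scale $H=t/V$, and without the energy count and this constraint the exponent $3/16$ cannot be extracted; your appeal to ``verifying the numerology'' is exactly the content that is missing. (A minor misattribution: the phase $e^{2\pi i\alpha v}$ arises from Fourier-separating the Friedlander--Iwaniec tent function $f$ after the change of variable $x=uy$, not from separating $n^{it}$.)

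Second, while you correctly invoke Lemma~\ref{sg} to smooth the point evaluation at $x=n/u$ into an integral over the interval $I_h=[h/H,(h+1)/H)$, you do not address the derivative term $\int |F'|$ that the inequality produces. In the paper this term is handled by partial summation on $G'(\lambda,x)$, which costs a factor $t/V$ (balanced against the factor $H=t/V$ coming from the first term) and, crucially, replaces the full sum over $v$ by a maximum over partial sums $\max_{V/2<Q\le V}$. That maximum is why~\eqref{double mean value1} has the shape it does, and why Lemma~\ref{double mean value} must first run Heath-Brown's binary-expansion decomposition before the block sums of fixed length $C$ can be estimated via Lemmas~\ref{int bound 1},~\ref{int bound 2} and~\ref{Burgess} (the Burgess, Huxley and Phong--Stein--Sturm inputs you cite). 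Your sketch jumps directly to the short sum of length $C$ and says nothing about the maximum or the derivative contribution; as written, the fourth-moment step would lose the $t$-saving entirely, since the term $q^{1/2}V^{5}/t^{1/2}$ in Lemma~\ref{double mean value} survives only because the derivative is absorbed at cost $t/V$ rather than $t$. Relatedly, your framing of a ``diagonal main term at scale $N^{1/2}(qt)^{o(1)}$'' does not match the actual bookkeeping: the diagonal set $\cV_1$ contributes $CqV^2$ to the mean value, and the final exponent comes from balancing $qV^3$ against $q^{1/2}V^5/t^{1/2}$ under the stated choices of $U$ and $V$, not from a diagonal/off-diagonal dichotomy in the sum over $n$.
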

Using Theorem~\ref{main1} as in~\cite{HB2} gives,
\begin{theorem}
For integer $q$, let $\chi$ be a primitive character $\pmod q$ and let $t\ge 1$. Then we have
$$L(1/2+it,\chi)\le (qt)^{3/16+o(1)}.$$
\end{theorem}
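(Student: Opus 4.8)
The plan is to derive the $L$-function bound from Theorem~\ref{main1} in the standard way, via the approximate functional equation, a dyadic decomposition and partial summation, splitting the range of $t$ at the threshold $t=q^{1/3}$. First I would invoke the approximate functional equation for $L(s,\chi)$ on the critical line, which expresses the value as
$$L(1/2+it,\chi)=\sum_{n\le X}\frac{\chi(n)}{n^{1/2+it}}+\varepsilon\sum_{n\le Y}\frac{\overline{\chi}(n)}{n^{1/2-it}}+O\big((qt)^{-c}\big),$$
where $|\varepsilon|=1$ and $XY\asymp qt$, so that both sums may be truncated at length $(qt)^{1/2+o(1)}$. Taking complex conjugates, and using that $\overline{\chi}$ is again primitive$\pmod q$, both sums are of the shape $\sum \psi(n)n^{it}n^{-1/2}$ with $\psi$ primitive, so it suffices to estimate one of them. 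I would split the range of summation into $O(\log(qt))$ dyadic blocks $M<n\le 2M$ with $M\le (qt)^{1/2+o(1)}$ and remove the smooth weight $n^{-1/2}$ by partial summation, reducing the contribution of each block to
$$M^{-1/2}\max_{M<u\le 2M}\left|\sum_{M<n\le u}\chi(n)n^{it}\right|.$$

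For the blocks with $M\le q^{1/4}t^{1/4}$ the trivial bound is enough, contributing $\ll (qt)^{1/8}$ in total after summing the resulting geometric series. For the remaining blocks I would apply Theorem~\ref{main1} with $N=M$, noting that the hypothesis $N\le M\le 2N$ holds automatically for a dyadic block; the maximal form over the partial sums $u\in(M,2M]$ that is needed for the partial summation is supplied by the same argument, since the estimate of Theorem~\ref{main1} is obtained through the maximal mean value~\eqref{double mean value1}, which already carries the maximum over the cutoff $Q$ inside. This bounds each inner maximum by $M^{1/2}(qt)^{3/16+o(1)}$, hence each block's contribution by $(qt)^{3/16+o(1)}$, and summing over the $O(\log(qt))$ blocks preserves this estimate up to the $(qt)^{o(1)}$ factor. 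This argument covers every block up to the full length $(qt)^{1/2}$ precisely when $(qt)^{1/2}\le q^{5/8}t^{1/8}$, that is, when $t\le q^{1/3}$.

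The main obstacle is exactly this range restriction: Theorem~\ref{main1} applies only to sums of length at most $q^{5/8}t^{1/8}$, while the approximate functional equation forces blocks up to length $(qt)^{1/2}$, and the two lengths coincide only at $t=q^{1/3}$. For $t>q^{1/3}$ I would instead invoke the van der Corput bound~\eqref{HHB1}; one checks directly that in this range $(qt)^{1/6}\le (qt)^{3/16}$ always, and $q^{1/4}\le (q\cdot q^{1/3})^{3/16}=(qt)^{3/16}$ using $t>q^{1/3}$, so that~\eqref{HHB1} already yields the desired estimate. Combining the two ranges gives $L(1/2+it,\chi)\le (qt)^{3/16+o(1)}$ for all $t\ge 1$.
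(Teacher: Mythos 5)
Your skeleton is exactly the intended deduction: the paper's entire proof of this theorem is the single line ``Using Theorem~\ref{main1} as in~\cite{HB2}'', and the argument in~\cite{HB2} is precisely your combination of the approximate functional equation, dyadic blocks of length $M\le (qt)^{1/2+o(1)}$ with partial summation to remove $n^{-1/2}$, the trivial bound for blocks with $M\le q^{1/4}t^{1/4}$, Theorem~\ref{main1} in the middle range, and the earlier hybrid bound~\eqref{HHB1} of~\cite{HB1} for $t>q^{1/3}$. Your arithmetic at the threshold is correct: $(qt)^{1/2}\le q^{5/8}t^{1/8}$ holds exactly when $t\le q^{1/3}$, and in the complementary range $(qt)^{1/6}\le (qt)^{3/16}$ together with $q^{1/4}\le (qt)^{3/16}$ for $t>q^{1/3}$ makes~\eqref{HHB1} sufficient; this splitting is also what the paper's introductory remark that~\eqref{HHB2} improves~\eqref{HHB1} only for $t\le q^{1/3}$ is signalling.

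The one place where your justification is wrong, though the claim it supports is true, is the maximal partial sums $\max_{M<u\le 2M}\bigl|\sum_{M<n\le u}\chi(n)n^{it}\bigr|$. The maximum over $Q$ in~\eqref{double mean value1} and Lemma~\ref{double mean value} runs over the cutoff of the \emph{inner} sum over $v$; it is created in the proof of Theorem~\ref{main1} when $G'$ is bounded by partial summation in $v$, and it has nothing to do with the cutoff in $n$. So Theorem~\ref{main1} as stated does not ``already carry'' the maximal form, and its hypothesis $N\le M\le 2N$ blocks a direct application to $\sum_{M<n\le u}$ once $u-M<M/2$, while the trivial bound only covers $u-M\le (qt)^{1/4}$, leaving a genuine intermediate range. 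The repair is easy and comes from a different part of the proof than the one you pointed to: the Friedlander--Iwaniec weight $f$ may be taken to be the tent function on $(M,M+N']$ for any $0<N'\le N$ while $U,V$ in~\eqref{U,V} and all subsequent steps remain tied to $N$; the shift identity needs only $uv\le UV\le N$ together with the $n$-range $(M-N,M+N]$, the ratios $n/u$ still lie in $[0,B]$ with $B\ll V$, and the cutoff $N'$ survives only inside the Fourier weight $g$, whose integral is still $(qt)^{o(1)}$. This yields $\max_{0<N'\le N}\bigl|\sum_{M<n\le M+N'}\chi(n)n^{it}\bigr|\le N^{1/2}(qt)^{3/16+o(1)}$ under the hypotheses of Theorem~\ref{main1}, which is exactly what your partial summation requires (and also absorbs the harmless $(qt)^{o(1)}$ excess in the truncation length from the approximate functional equation). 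With that substitution for your parenthetical justification, the proof is complete and matches the paper's intended route.
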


\section{Prelimanary Results}
The proof of the following can be found in the proof of~\cite[Lemma~7]{FGS}.
\begin{lemma}
\label{congruence}
For integers $q,M,N,U$ let $I$ denote the number of solutions to the congruence
$$n_1u_1 \equiv n_2u_2 \pmod q,$$
with  
$$1\le u_1,u_2 \le U, \quad M<n_1,n_2 \le M+N,$$
and 
$$(u_1,q)=1, \quad  (u_2,q)=1.$$
Then 
$$I\le NU\left(\frac{NU}{q}+1\right)q^{o(1)}.$$
\end{lemma}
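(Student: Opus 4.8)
The plan is to interpret $I$ as the number of quadruples $(n_1,n_2,u_1,u_2)$ with $n_1u_1\equiv n_2u_2\pmod q$ and to separate the diagonal contribution, where $n_1u_1=n_2u_2$ as an identity of integers, from the remaining off-diagonal terms. The key structural observation is that if $n_1u_1\equiv n_2u_2\pmod q$ but $n_1u_1\ne n_2u_2$, then $n_1u_1-n_2u_2$ is a nonzero multiple of $q$, so $|n_1u_1-n_2u_2|\ge q$; there is no intermediate regime. I would therefore write $I=I_{=}+I_{\ne}$ and bound the two pieces by different means.

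For the diagonal term $I_{=}$ I would use the divisor bound. Fixing the value $m=n_1u_1=n_2u_2$, the pair $(n_2,u_2)$ is a factorisation of $m$ constrained by $M<n_2\le M+N$ and $1\le u_2\le U$, so once $(n_1,u_1)$ is chosen there are at most $\tau(m)\le m^{o(1)}=q^{o(1)}$ admissible $(n_2,u_2)$. Since there are at most $NU$ choices of $(n_1,u_1)$, this yields $I_{=}\le NU\,q^{o(1)}$, which is precisely the secondary term in the claimed bound.

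For the off-diagonal term $I_{\ne}$ I would exploit the coprimality conditions $(u_1,q)=(u_2,q)=1$. Since $u_2$ is invertible modulo $q$, for each fixed $n_1,u_1,u_2$ the congruence pins $n_2$ to a single residue class modulo $q$, and an interval of length $N$ meets such a class in at most $N/q+1$ points. Summing the main part $N/q$ over the at most $NU^2$ triples $(n_1,u_1,u_2)$ produces the dominant term $N^2U^2/q$. The delicate point is the accompanying $+1$, which is wasteful exactly when $N<q$. Here the coprimality is decisive: it removes the would-be off-diagonal solutions with $u_1=u_2$, because a relation $(a_1-a_2)u\equiv 0\pmod q$ with $u$ invertible and $|a_1-a_2|\le N<q$ forces $a_1=a_2$, i.e. genuine equality rather than an off-diagonal term. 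Combining this observation with a splitting according to $d=\gcd(n_1,q)=\gcd(n_2,q)$, which reduces matters to the invertible case after dividing out $d$, I expect to obtain $I_{\ne}\le (N^2U^2/q)\,q^{o(1)}$, and adding the two contributions gives the stated bound.

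The main obstacle is the secondary term. The naive residue-class count in a single variable loses a factor of $\min(N,U)$ in the range where neither $N$ nor $U$ exceeds $q$, and recovering the clean contribution $NU\,q^{o(1)}$ depends on showing that the boundary $+1$ terms are not independent over all triples but instead collapse onto the integer diagonal $n_1u_1=n_2u_2$, where they are governed by the divisor bound. The main-term estimate is routine; essentially all of the work lies in this collapse and in the gcd bookkeeping that makes the inversion step legitimate.
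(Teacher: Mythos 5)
Your opening dichotomy (either $n_1u_1=n_2u_2$ or $|n_1u_1-n_2u_2|\ge q$) and your divisor-bound treatment of the diagonal are exactly right, but the proof has a genuine gap at the point you yourself flag: the bound $I_{\ne}\le (N^2U^2/q)\,q^{o(1)}$ is never established, and neither of your proposed mechanisms can establish it. The residue-class count gives $I_{\ne}\le NU^2(N/q+1)=N^2U^2/q+NU^2$, and the parasitic $NU^2$ (fatal whenever $N<q$, the typical regime here) does not come only from triples with $u_1=u_2$: for $u_1\ne u_2$ the boundary $+1$ terms record genuine off-diagonal solutions, namely quadruples with $n_1u_1-n_2u_2=kq$, $k\ne 0$. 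These do \emph{not} collapse onto the integer diagonal: as soon as $NU\gg q$ such levels really are populated, with up to $NUq^{o(1)}$ solutions on each, and the divisor bound on the set $n_1u_1=n_2u_2$ says nothing about them. The gcd splitting is neutral (the congruence forces $d=(n_1,q)=(n_2,q)$, and dividing $d$ out replaces $(N,q)$ by $(N/d,q/d)$, leaving the per-triple count $N/q+1$ unchanged), and coprimality of the $u_i$ is not the source of any saving — the correct argument below never uses it. The residue-class route cannot be repaired, because counting one variable in a fixed class is local to each triple and cannot see the global constraint that $n_1u_1-n_2u_2$ must land on one of very few multiples of $q$.

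The missing idea is to run your own diagonal argument on \emph{every} level $k$, not just $k=0$; this is how the proof the paper points to (that of~\cite[Lemma~7]{FGS}) proceeds. In the relevant ranges one has $M=O(N)$ (as in the application, where $N\le M\le 2N$; some such restriction is in fact necessary — for $N=1$ and $q\mid M+1$ one gets $I\gg U^{2-o(1)}$, contradicting the stated bound — and it is also what justifies your step $\tau(m)\le q^{o(1)}$, which needs $m\le q^{O(1)}$). Then $1\le n_iu_i\le (M+N)U\ll NU$, so $n_1u_1-n_2u_2=kq$ with $|k|\ll NU/q$, i.e.\ $O(NU/q+1)$ admissible levels, and none besides $k=0$ when $NU$ is small compared to $q$ — your ``no intermediate regime'' made quantitative. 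For each fixed $k$, choose $(n_1,u_1)$ in at most $NU$ ways; then $n_2u_2$ equals the fixed positive integer $n_1u_1-kq$, so $(n_2,u_2)$ is one of at most $\tau(n_1u_1-kq)\le q^{o(1)}$ factorizations. Summing over $k$ gives
$$I\le NU\left(\frac{NU}{q}+1\right)q^{o(1)},$$
with your diagonal estimate appearing as precisely the $k=0$ stratum, and with no residue-class counting, no coprimality, and no gcd bookkeeping required.
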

As in~\cite{HB2} we use the Sobolev-Gallagher inequality, for the proof see~\cite[Lemma 1.1]{Mg}.
\begin{lemma}
\label{sg}
Let $b>a$ and suppose $f(x)$ has continuous derivative for $a\le x \le b$. For any $a\le u \le b$ we have 
$$|f(u)|\le \frac{1}{b-a}\int_{a}^{b}|f(x)|dx+\int_{a}^{b}|f'(x)|dx.$$
\end{lemma}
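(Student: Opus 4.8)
The plan is to reduce the inequality to the fundamental theorem of calculus followed by an averaging argument over the interval $[a,b]$. Fix the point $u \in [a,b]$ at which we wish to bound $f$. For an arbitrary second point $x \in [a,b]$, the hypothesis that $f$ has a continuous derivative lets me write $f(u) = f(x) + \int_x^u f'(t)\,dt$. Taking absolute values and bounding the integral over the subinterval between $x$ and $u$ by the integral of $|f'|$ over all of $[a,b]$, I obtain the pointwise comparison $|f(u)| \le |f(x)| + \int_a^b |f'(t)|\,dt$, which holds for every $x \in [a,b]$ and in particular absorbs any orientation issue from $\int_x^u$ into the larger, positively oriented interval.

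The second step is simply to average this estimate in the variable $x$. Since the left-hand side $|f(u)|$ is constant in $x$, integrating both sides over $x \in [a,b]$ gives $(b-a)\,|f(u)| \le \int_a^b |f(x)|\,dx + (b-a)\int_a^b |f'(t)|\,dt$. Dividing through by $b-a > 0$ then yields precisely the asserted bound, with the mean value $\frac{1}{b-a}\int_a^b |f(x)|\,dx$ appearing in place of the single term $|f(x)|$.

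I do not anticipate any genuine obstacle, as the argument is entirely elementary. The only points requiring mild care are the orientation of the inner integral $\int_x^u$, handled cleanly by passing to $|f'|$ over the full interval, and the continuity of $f'$, which guarantees that $|f'|$ is integrable and legitimizes both the use of the fundamental theorem of calculus and the subsequent integration in $x$. The conceptual content of the lemma is exactly this averaging trick: the trivial inequality $|f(u)| - |f(x)| \le \int_a^b |f'|$ is useless for a single bad choice of $x$, but averaging it over all $x \in [a,b]$ replaces the uncontrolled value $|f(x)|$ by the $L^1$ mean of $f$, which is precisely what one can control in applications to character sums.
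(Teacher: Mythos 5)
Your proof is correct and is essentially the same averaging argument as in the cited reference \cite[Lemma~1.1]{Mg} (the paper itself only cites Montgomery rather than reproving the lemma): write $f(u)=f(x)+\int_x^u f'(t)\,dt$, bound the inner integral by $\int_a^b|f'(t)|\,dt$, and average over $x\in[a,b]$. Nothing is missing, and your handling of the orientation of $\int_x^u$ and the integrability of $|f'|$ is exactly right.
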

The following is a special case of~\cite[Theorem~1]{PSS}. We reproduce the proof for the special case relevant to us.
\begin{lemma}
\label{poly small}
Let $F(x)=Lx^2+Mx+N$ be a polynomial of degree $2$ with real coefficients and two distinct roots $\alpha_1,\alpha_2$. Then
$$\mu \left( \{ x \in \mathbb{R} : |F(x)|\le \varepsilon \} \right) \le \frac{8\varepsilon}{|L(\alpha_1-\alpha_2)|},$$
where $\mu(.)$ denotes the Lebesgue measure.
 \end{lemma}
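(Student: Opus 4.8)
The plan is to reduce the estimate to computing the Lebesgue measure of a sublevel set of a \emph{normalized} quadratic and then to evaluate that measure essentially explicitly. First I would divide through by $L$: since $\{x : |F(x)| \le \varepsilon\}$ equals $\{x : |(x-\alpha_1)(x-\alpha_2)| \le \delta\}$ with $\delta = \varepsilon/|L|$, and the target bound reads $8\delta/|\alpha_1-\alpha_2|$, I may assume $F$ is monic without loss of generality. Writing $\rho = |\alpha_1-\alpha_2|/2$ for the half-separation of the roots and translating by the midpoint $m = (\alpha_1+\alpha_2)/2$ through the substitution $y = x-m$, the product factorizes as $(y-\rho)(y+\rho) = y^2-\rho^2$ in the case of two distinct real roots, which is the situation relevant here (the complex-conjugate case only shrinks the set and is treated the same way). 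It therefore suffices to bound $\mu(\{y : |y^2-\rho^2| \le \delta\})$ by $8\delta/(2\rho) = 4\delta/\rho$.

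Next I would describe this set explicitly. The inequality $|y^2-\rho^2| \le \delta$ is equivalent to $\rho^2-\delta \le y^2 \le \rho^2+\delta$, so the set is precisely $\{y : \sqrt{\max(0,\rho^2-\delta)} \le |y| \le \sqrt{\rho^2+\delta}\}$, a symmetric union of at most two intervals whose measure equals $2\bigl(\sqrt{\rho^2+\delta} - \sqrt{\max(0,\rho^2-\delta)}\bigr)$.

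The remaining work is a two-regime comparison. If $\delta \ge \rho^2$, the lower limit vanishes and the measure is $2\sqrt{\rho^2+\delta} \le 2\sqrt{2\delta}$; since $\rho \le \sqrt{\delta}$ in this regime, this is at most $4\sqrt{\delta} \le 4\delta/\rho$. If instead $\delta < \rho^2$, I would rationalize the difference of square roots, writing $\sqrt{\rho^2+\delta} - \sqrt{\rho^2-\delta} = 2\delta/(\sqrt{\rho^2+\delta}+\sqrt{\rho^2-\delta})$, and then bound the denominator below by $\sqrt{\rho^2} = \rho$ to obtain measure $\le 4\delta/\rho$. In both regimes the measure is at most $4\delta/\rho = 8\varepsilon/(|L|\,|\alpha_1-\alpha_2|)$, which is exactly the asserted bound.

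I do not expect a genuine obstacle in this argument, since everything is elementary once the normalization and translation are made. The only point needing a little care is the regime boundary $\delta = \rho^2$, where the two symmetric intervals merge into one; correspondingly, the generous constant $8$ (in place of the optimal constant) provides precisely the slack that lets both regimes close without any optimization, so the hard part is really just organizing the case split rather than any delicate estimate.
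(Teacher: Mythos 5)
Your proposal is correct, but it takes a genuinely different route from the paper's. You normalize to a monic, midpoint-centred quadratic and compute the sublevel set \emph{exactly}: $\{y : |y^2-\rho^2|\le\delta\}$ is a symmetric union of at most two intervals of total length $2\bigl(\sqrt{\rho^2+\delta}-\sqrt{\max(0,\rho^2-\delta)}\bigr)$, which you bound by $4\delta/\rho$ via the two-regime split and rationalization; both regimes check out, and your parenthetical claim about complex-conjugate roots is also correct, since for roots $m\pm i\rho$ one has $\{y:|y^2+\rho^2|\le\delta\}=\{y:y^2\le\delta-\rho^2\}\subseteq\{y:|y^2-\rho^2|\le\delta\}$ --- a case worth addressing explicitly, since in the paper's application (Lemma~\ref{int bound 1}) the quantity $\Delta$ may be negative, so the roots there need not be real. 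The paper instead follows the Phong--Stein--Sturm scheme: it splits $\mathbb{R}$ into the two sets $I_1,I_2$ according to which root is nearer, uses the triangle inequality to get $|x-\alpha_j|\ge|\alpha_1-\alpha_2|/2$ for the farther root, hence $|F(x)|\ge |L|\,|x-\alpha_i|\,|\alpha_1-\alpha_2|/2$ on each piece, and so traps the sublevel set in two intervals of length $4\varepsilon/|L(\alpha_1-\alpha_2)|$ each --- never solving for the set at all. What each approach buys: your exact computation is special to quadratics but sharper (tracing your constants through the worst case $\delta=\rho^2$ yields $4\sqrt{2}\,\varepsilon/|L(\alpha_1-\alpha_2)|$ in place of $8\varepsilon/|L(\alpha_1-\alpha_2)|$), whereas the paper's nearest-root dichotomy is precisely the argument that generalizes to polynomials of arbitrary degree, being the quadratic case of \cite[Theorem~1]{PSS}.
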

\begin{proof}
Writing
$$F(x)=L(x-\alpha_1)(x-\alpha_2),$$
we split $\mathbb{R}$ into two sets
\begin{align*}
I_1&=\{ x\in \mathbb{R} : |x-\alpha_1|\le |x-\alpha_2| \}, \\
I_2&=\{ x\in \mathbb{R} : |x-\alpha_2|\le |x-\alpha_1| \}.
\end{align*}
Then if $x\in I_1$ we have
$$|\alpha_1-\alpha_2|\le |\alpha_1-x|+|\alpha_2-x|\le 2|\alpha_2-x|,$$
and if $x \in I_2$
$$|\alpha_1-\alpha_2|\le 2|\alpha_1-x|.$$
Hence
$$|F(x)|=|L(x-\alpha_1)(x-\alpha_2)|\ge \frac{|L(x-\alpha_1)(\alpha_1-\alpha_2)|}{2}, \quad \text{if} \quad x\in I_1, $$
and
$$|F(x)| \ge \frac{|L(x-\alpha_2)(\alpha_1-\alpha_2)|}{2}, \quad \text{if} \quad  x \in I_2,$$
which gives the set inclusions
$$I_1 \cap \{ x \in \mathbb{R} : |F(x)|\le \varepsilon \} \subseteq \left \{ x \in \mathbb{R} : \frac{|L(x-\alpha_1)(\alpha_1-\alpha_2)|}{2} \le \varepsilon \right \},  $$
and
$$I_2 \cap \{ x \in \mathbb{R} : |F(x)|\le \varepsilon \} \subseteq \left \{ x \in \mathbb{R} : \frac{|L(x-\alpha_2)(\alpha_1-\alpha_2)|}{2} \le \varepsilon \right \}.  $$
Since
$$I_1 \cup I_2=\mathbb{R},$$
we get
\begin{align*}
\mu \left( \{ x \in \mathbb{R} : |F(x)|\le \varepsilon \} \right)&\le \mu \left( \left \{ x \in \mathbb{R} : \frac{|L(x-\alpha_1)(\alpha_1-\alpha_2)|}{2} \le \varepsilon \right \} \right) \\ & \quad \quad \quad +\mu \left( \left \{ x \in \mathbb{R} : \frac{|L(x-\alpha_2)(\alpha_1-\alpha_2)|}{2} \le \varepsilon \right \} \right),
\end{align*}
and the result follows since
$$\mu \left( \left \{ x \in \mathbb{R} : \frac{|L(x-\alpha_i)(\alpha_1-\alpha_2)|}{2} \le \varepsilon \right \} \right)\le \frac{4 \varepsilon}{|L(\alpha_1-\alpha_2)|}, \quad i=1,2.$$
\end{proof}

\begin{lemma}
\label{int bound 1}
Let $A,B,V$ be real numbers satisfying $$0\le A<B, \quad  B \ll V, \quad V\ge 1,$$ and let $t\ge 1$. Let the integers $v_1,v_2,v_3,v_4$ satisfy $$V/2<v_i\le V, \quad (v_1-v_3)(v_1-v_4)(v_2-v_3)(v_2-v_4) \neq 0,$$
and let 
$$F(x)=\frac{(x+v_1)(x+v_2)}{(x+v_3)(x+v_4)},$$
and
$$\Delta= (v_1-v_3)(v_1-v_4)(v_2-v_3)(v_2-v_4). $$
Then if $$v_1+v_2 \neq v_3+v_4,$$ we have 
$$\int_{A}^{B}F(x)^{it}dx \ll \frac{V^2}{t^{1/2}|\Delta|^{1/4}},$$
and if $$v_1+v_2=v_3+v_4,$$ 
we have
$$\int_{A}^{B}F(x)^{it}dx \ll\frac{V^4}{t|(v_1-v_4)(v_2-v_4)|}.$$
\end{lemma}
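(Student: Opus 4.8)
The plan is to write $F(x)^{it}=e^{i\phi(x)}$ with real phase $\phi(x)=t\log F(x)=t\bigl(\log(x+v_1)+\log(x+v_2)-\log(x+v_3)-\log(x+v_4)\bigr)$; this is legitimate because each factor $x+v_i$ is positive on $[A,B]$, so $F(x)>0$. Differentiating and combining the four simple fractions over a common denominator gives $\phi'(x)=t\,N(x)/D(x)$, where $D(x)=\prod_{i=1}^{4}(x+v_i)$ and
$$N(x)=(s_2-s_1)x^2+2(p_2-p_1)x+(s_1p_2-s_2p_1),\qquad s_1=v_1+v_2,\ p_1=v_1v_2,\ s_2=v_3+v_4,\ p_2=v_3v_4 .$$
Since $0\le A<B\ll V$ and $V/2<v_i\le V$ force $x+v_i\asymp V$ on $[A,B]$, we have $D(x)\asymp V^4$. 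The algebraic fact that drives the whole estimate is the discriminant identity $\operatorname{disc}(N)=4\Delta$, i.e. $(p_2-p_1)^2-(s_2-s_1)(s_1p_2-s_2p_1)=\Delta$, which I would verify by direct expansion.

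Consider first the generic case $v_1+v_2\ne v_3+v_4$, where $N$ is a genuine quadratic with leading coefficient $L=s_2-s_1$; the identity gives $|L(\alpha_1-\alpha_2)|=2|\Delta|^{1/2}$ for its roots $\alpha_1,\alpha_2$. The core of the argument is a first-derivative-test dissection: for a parameter $\lambda>0$ put $S_\lambda=\{x\in[A,B]:|\phi'(x)|<\lambda\}$ and split $\int_A^B F(x)^{it}\,dx$ over $S_\lambda$ and its complement. On $S_\lambda$ one has $|N(x)|<\lambda D(x)/t\ll\lambda V^4/t$, and I would show $\mu(S_\lambda)\ll\lambda V^4/(t|\Delta|^{1/2})$: for $\Delta>0$ this is precisely Lemma~\ref{poly small} with $\varepsilon\asymp\lambda V^4/t$ combined with $|L(\alpha_1-\alpha_2)|=2|\Delta|^{1/2}$, while for $\Delta<0$ the quadratic $N$ is definite and the same bound follows from the elementary inequality $2\sqrt{u-1}\le u$ applied to its sublevel set. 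Hence $\bigl|\int_{S_\lambda}F(x)^{it}\,dx\bigr|\le\mu(S_\lambda)\ll\lambda V^4/(t|\Delta|^{1/2})$. On the complement $[A,B]\setminus S_\lambda$ we have $|\phi'|\ge\lambda$, and since $\phi''=t(N/D)'$ is a rational function whose numerator has bounded degree it has $O(1)$ zeros; thus $[A,B]\setminus S_\lambda$ is a union of $O(1)$ intervals on each of which $\phi'$ is monotone and $|\phi'|\ge\lambda$, so the first-derivative test contributes $\ll1/\lambda$ on each and $\ll1/\lambda$ in total. Balancing the two contributions at $\lambda=t^{1/2}|\Delta|^{1/4}/V^2$ gives $\int_A^B F(x)^{it}\,dx\ll V^2/(t^{1/2}|\Delta|^{1/4})$.

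In the degenerate case $v_1+v_2=v_3+v_4$ the quadratic term drops out and $N(x)=(p_2-p_1)(2x+s_1)$ is linear. The relation $v_1+v_2=v_3+v_4$ yields $|v_1-v_3|=|v_2-v_4|$ and $|v_1-v_4|=|v_2-v_3|$, whence $|p_2-p_1|=|\Delta|^{1/2}=|(v_1-v_4)(v_2-v_4)|$; moreover $2x+s_1=(x+v_1)+(x+v_2)\asymp V$ on $[A,B]$. Therefore $|\phi'(x)|\asymp t|\Delta|^{1/2}/V^3$ is bounded below throughout $[A,B]$, and the first-derivative test over the $O(1)$ monotonicity intervals of $\phi'$ gives $\int_A^B F(x)^{it}\,dx\ll V^3/(t|\Delta|^{1/2})\le V^4/(t|(v_1-v_4)(v_2-v_4)|)$, using $V\ge1$.

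The step I expect to require the most care is the sublevel-set measure bound uniform in the sign of $\Delta$: Lemma~\ref{poly small} is designed for a quadratic with two distinct real roots, so the complex-root case $\Delta<0$—which genuinely occurs—must be handled by the short separate computation above, and it is reassuring that it returns the identical bound $\mu(S_\lambda)\ll\lambda V^4/(t|\Delta|^{1/2})$. The other ingredients, namely the discriminant identity, the $O(1)$ bound on the number of monotonicity intervals, and the optimization in $\lambda$, are routine once this is settled.
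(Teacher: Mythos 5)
Your proof is correct and follows essentially the same route as the paper: the same dissection of $[A,B]$ into the sublevel set of the quadratic numerator of $F'$ (whose measure is bounded via Lemma~\ref{poly small} together with $|L(\alpha_1-\alpha_2)|=2|\Delta|^{1/2}$) and its complement (bounded by the first-derivative test, which is just the paper's integration by parts recast in phase form), with the same optimization of the cutoff, your $\lambda$ corresponding to the paper's $\varepsilon=\lambda V^4/t$. Two minor remarks: your discriminant identity carries the correct sign (in the paper's notation the leading coefficient of the numerator of $F'$ is $(v_3+v_4)-(v_1+v_2)$, a harmless slip there), and your separate treatment of $\Delta<0$ is sound but not strictly necessary, since the proof of Lemma~\ref{poly small} uses only complex moduli and therefore already covers a pair of distinct complex-conjugate roots.
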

\begin{proof}
Consider first when
$$v_1+v_2\neq v_3+v_4.$$
Let 
$$F(x)=\frac{(x+v_1)(x+v_2)}{(x+v_3)(x+v_4)},$$
and
\begin{align}
\label{LMN}
L&=(v_1+v_2)-(v_3+v_4), \nonumber \\
M&=v_3v_4-v_1v_2, \\
N&=(v_1+v_2)v_3v_4-(v_3+v_4)v_1v_2, \nonumber
\end{align}
so that
\begin{align}
\label{delta equation}
M^2-LN=\Delta=(v_1-v_3)(v_1-v_4)(v_2-v_3)(v_2-v_4),
\end{align}
and
\begin{equation}
\label{F derivative}
F'(x)= \frac{Lx^2+2Mx+N}{(x+v_3)^2(x+v_4)^2}.
\end{equation}
Since the discriminant of the polynomial occuring in the numerator of~\eqref{F derivative} is 
$$4\Delta=4(v_1-v_3)(v_1-v_4)(v_2-v_3)(v_2-v_4),$$ 
by the assumption that $\Delta \neq 0$ we see that the polynomial
$$f(x)=Lx^2+2Mx+N,$$
has two distinct roots $\alpha_1, \alpha_2$. Hence by Lemma~\ref{poly small}, for any fixed $\varepsilon>0$
\begin{align*}
\left|\int_{A}^{B}F(x)^{it}dx\right|&\le \left|\int_{\substack{A\le x \le B \\ |f(x)|\le \varepsilon}}F(x)^{it}dx\right|+\left|
\int_{\substack{A\le x \le B \\ |f(x)|> \varepsilon}}F(x)^{it}dx\right| \\
& \ll \frac{\varepsilon}{|L(\alpha_1-\alpha_2)|}+\left|\int_{\substack{A\le x \le B \\ |f(x)|> \varepsilon}}F(x)^{it}dx\right|. 
\end{align*}
Integrating the second integral by parts gives
\begin{align*}
\int_{\substack{A\le x \le B \\ |f(x)|> \varepsilon}}F(x)^{it}dx&=\int_{\substack{A\le x \le B \\ |f(x)|> \varepsilon}}\frac{F'(x)}{F'(x)}F(x)^{it}dx \\
&=\left [\frac{F(x)^{1+it}}{(1+it)F'(x)} \right]_{\substack{A\le x \le B \\ |f(x)|> \varepsilon}} -\frac{1}{1+it}\int_{\substack{A\le x \le B \\ |f(x)|> \varepsilon}}\frac{F''(x)}{F'(x)^{2}}F(x)^{1+it}dx.
\end{align*}
Since  $$V/2<v_i\le V, \quad 0\le A<B\ll V,$$ 
we have for  $A\le x \le B$ and $|f(x)|>\varepsilon$
\begin{align*}
|F(x)|&\ll \left|\frac{(x+V)}{(2x+V)}\right|^2\ll 1, \\
\frac{1}{|F'(x)|}&=\left|\frac{(x+v_3)^2(x+v_4)^2}{Lx^2+2Mx+N}\right|\gg \frac{V^4}{\varepsilon},
\end{align*}
so that 
\begin{align*}
\left|\int_{\substack{A\le x \le B \\ |f(x)|> \varepsilon}}F(x)^{it}dx \right|&\le \frac{V^4}{t\varepsilon}+\frac{1}{t}\int_{\substack{A\le x \le B \\ |f(x)|> \varepsilon}}\left|\frac{F''(x)}{F'(x)^{2}}\right|dx.
\end{align*}
For the last integral, since 
\begin{equation}
\label{F''}
F''(x)=\frac{\frac{d((Lx^2+2Mx+N))}{dx}(x+v_3)^2(x+v_4)^2-(Lx^2+2Mx+N)\frac{d(((x+v_3)(x+v_4))^2)}{dx}}{(x+v_3)^4(x+v_4)^4},
\end{equation}
and the polynomial occuring in the numerator of~\eqref{F''} has degree at most $5$, we see that for real $x$ the function
$$\frac{F''(x)}{F'(x)^2},$$
has at most $5$ sign changes. Hence we may break the integral
$$\int_{\substack{A\le x \le B \\ |f(x)|> \varepsilon}}\left|\frac{F''(x)}{F'(x)^{2}}\right|dx,$$
into $O(1)$ integrals of the form
$$\left|\int_{A_i}^{B_i}\frac{F''(x)}{F'(x)^2}dx\right|,$$
for some $A_i,B_i$ with 
$$|f(x)|\ge \varepsilon \quad \text{for} \quad A_i \le x \le B_i.$$
Since
$$\left|\int_{A_i}^{B_i}\frac{F''(x)}{F'(x)^2}dx\right|\le \frac{1}{|F'(A_i)|}+ \frac{1}{|F'(B_i)|} \ll \frac{V^4}{\varepsilon},$$
we get
$$\int_{\substack{A\le x \le B \\ |f(x)|> \varepsilon}}\left|\frac{F''(x)}{F'(x)^{2}}\right|dx \ll \frac{V^4}{\varepsilon},$$
so that
$$\left|\int_{A}^{B}F(x)^{it}dx\right| \ll \frac{\varepsilon}{|L(\alpha_1-\alpha_2)|} + \frac{V^4}{t\varepsilon}.$$
Since $\alpha_1$ and $\alpha_2$ are the roots of the polynomial
$$f(x)=Lx^2+2Mx+N,$$
we see that
\begin{align*}
|L(\alpha_1-\alpha_2)|&=2|M^2-LN|^{1/2} \\ &=2|(v_1-v_3)(v_1-4)(v_2-v_3)(v_2-v_4)|^{1/2} \\
&=2|\Delta|^{1/2},
\end{align*}
so that taking $$\varepsilon=\frac{|\Delta|^{1/4}V^2}{t^{1/2}},$$
gives
$$\left|\int_{A}^{B}F(x)^{it}dx\right| \ll \frac{V^2}{|\Delta|^{1/4}t^{1/2}}.$$
Next consider when 
$$v_1+v_2=v_3+v_4,$$
so that~\eqref{F derivative} becomes
\begin{equation}
\label{FFFFF}
F'(x)= \frac{(2x+v_1+v_2)(v_1-v_4)(v_2-v_4)}{(x+v_3)^2(x+v_4)^2},
\end{equation}
and we see that $F'(x)$ has one zero at $x=-(v_1+v_2)/2$. Since 
\begin{equation}
\label{A,B,v_i}
0\le A<B, \quad  (v_1+v_2)/2 \ge V/2,
\end{equation}
integrating by parts as above gives
\begin{align*}
\left|\int_{A}^{B}F(x)^{it}dx\right|\ll \frac{1}{t}\max_{A\le x \le B}\frac{1}{|F'(x)|},
\end{align*}
so that by~\eqref{FFFFF} and~\eqref{A,B,v_i} we have
\begin{align*}
\left|\int_{A}^{B}F(x)^{it}dx\right| &\ll \frac{1}{t}\max_{\substack{A\le x \le B}}\frac{1}{|F'(x)|}\ll  \frac{V^4}{t|(v_1-v_4)(v_2-v_4)|}.
\end{align*}
\end{proof}
\begin{lemma}
\label{int bound 2}
Let $A,B,V$ be real numbers satisfying $$0\le A<B, \quad  B \ll V,$$ and let $t\ge 1$. For integers $v_1, v_4$ satisfying $$V/2<v_1,v_4 \le V, \quad v_1\neq v_4,$$ we have
$$ \int_{A}^{B}\left(\frac{x+v_1}{x+v_4}\right)^{it}dx\ll \frac{V^2}{t|v_1-v_4|}.$$
\end{lemma}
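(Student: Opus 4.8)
The plan is to treat this as the degenerate (no-root) case of Lemma~\ref{int bound 1} and integrate by parts a single time, exploiting the fact that here the derivative of the integrand has no zero in $[A,B]$. Write $G(x)=\frac{x+v_1}{x+v_4}$. First I would record the elementary derivatives
\[ G'(x)=\frac{v_4-v_1}{(x+v_4)^2}, \qquad G''(x)=\frac{-2(v_4-v_1)}{(x+v_4)^3}, \]
and note that, since $v_1\neq v_4$, the numerator of $G'$ is a nonzero constant, so $G'(x)$ never vanishes on $[A,B]$. This is precisely what makes the argument cleaner than Lemma~\ref{int bound 1}: there is no need to excise a neighbourhood of a root of the phase via Lemma~\ref{poly small}. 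Using $0\le A<B\ll V$ and $V/2<v_1,v_4\le V$ we have $x+v_i\gg V$ and $x+v_i\ll V$ on $[A,B]$, hence the uniform estimates
\[ |G(x)|\ll 1 \qquad\text{and}\qquad \frac{1}{|G'(x)|}=\frac{(x+v_4)^2}{|v_1-v_4|}\ll \frac{V^2}{|v_1-v_4|}. \]

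Next I would integrate by parts exactly as in the case $v_1+v_2=v_3+v_4$ of Lemma~\ref{int bound 1}, writing $G^{it}=\frac{G'}{G'}G^{it}$ and moving the antiderivative onto $G^{1+it}/((1+it)G')$. Since $G'$ has no zero in $[A,B]$, this may be carried out over the whole interval at once, producing a boundary term $\left[\frac{G^{1+it}}{(1+it)G'}\right]_A^B$ together with the remaining integral $\frac{1}{1+it}\int_A^B \frac{G''}{G'^2}G^{1+it}\,dx$. Because $G(x)>0$ throughout, $|G^{1+it}|=|G|\ll 1$, so the boundary term is $\ll \frac{|G|}{t|G'|}\ll \frac{V^2}{t|v_1-v_4|}$, which is already the claimed order.

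For the remaining integral the key observation is that
\[ \frac{G''(x)}{G'(x)^2}=\frac{-2(x+v_4)}{v_4-v_1} \]
is linear in $x$ and vanishes only at $x=-v_4<0$, hence keeps a constant sign on $[A,B]$. Bounding $|G^{1+it}|\ll 1$ and using this constant sign, the integral of the absolute value can be evaluated exactly rather than merely estimated: it equals $\left|\int_A^B \frac{G''}{G'^2}\,dx\right|=\left|\frac{1}{G'(A)}-\frac{1}{G'(B)}\right|\le \frac{1}{|G'(A)|}+\frac{1}{|G'(B)|}\ll \frac{V^2}{|v_1-v_4|}$. Multiplying by the $1/t$ factor and combining with the boundary term gives $\left|\int_A^B G^{it}\,dx\right|\ll \frac{V^2}{t|v_1-v_4|}$, as required. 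I do not expect a genuine obstacle here; the only points needing care are verifying the size estimates uniformly on $[A,B]$ and the constant-sign property of $G''/G'^2$, which together reduce the final step to the telescoping evaluation above.
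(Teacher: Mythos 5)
Your proof is correct and takes essentially the same route as the paper: the paper's proof of Lemma~\ref{int bound 2} simply says ``integrating by parts as in the proof of Lemma~\ref{int bound 1}'', which is precisely the boundary-term-plus-remainder computation you carry out, with your observation that $G''(x)/G'(x)^2=-2(x+v_4)/(v_4-v_1)$ vanishes only at $x=-v_4<0$ (hence has constant sign on $[A,B]$, allowing a single telescoping evaluation) being exactly the one-interval special case of the sign-change decomposition used there. No gaps; your write-up just makes the paper's one-line cross-reference explicit.
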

\begin{proof}
Let 
$$F(x)=\frac{x+v_1}{x+v_4},$$
so that
$$F'(x)=\frac{v_4-v_1}{(x+v_4)^2}.$$
Integrating by parts as in the proof of Lemma~\ref{int bound 1} gives
\begin{align*}
\left|\int_{A}^{B}\left(\frac{x+v_1}{x+v_4}\right)^{it}\right| &\ll \max_{A\le x \le B}\frac{1}{t|F'(x)|} \\
&\ll\frac{V^2}{t|v_1-v_4|}.
\end{align*}
\end{proof}
The following is due to Burgess~\cite[Lemma~2,3,4]{Burg1}.
\begin{lemma}
\label{bur}
Let $p$ be prime and $\alpha$ be an integer. For integers $v_1,v_2,v_3,v_4$ let $N(p^{\alpha})$ denote the number of solutions to the congruence
$$((v_1+v_2)-(v_3+v_4))x^2+(v_3v_4-v_1v_2)x+(v_1+v_2)v_3v_4-(v_3+v_4)v_1v_2\equiv 0 \pmod {p^{\alpha}},$$
with $1\le x \le p^{\alpha}$ and let $\chi$ be a primitive character$\pmod{ p^{\alpha}}$. Then if $p$ is odd
\begin{align*}
\left|\sum_{n=1}^{p^{\alpha}}\chi\left(\frac{(x+v_1)(x+v_2)}{(x+v_3)(x+v_4)}\right)\right|\le \begin{cases} N(p^{\alpha/2})p^{\alpha/2}, \quad \text{if $\alpha$ is even}, \\ 
N(p^{(\alpha-1)/2})p^{\alpha/2}+N(p^{(\alpha+1)/2})p^{(\alpha-1)/2}, \\ \quad \quad \quad \quad \quad  \quad \ \  \text{if $\alpha$ is odd,}
 \end{cases}
\end{align*}
and if $p=2$
\begin{align*}
\left|\sum_{n=1}^{2^{\alpha}}\chi\left(\frac{(x+v_1)(x+v_2)}{(x+v_3)(x+v_4)}\right)\right|\le \begin{cases} N(2^{\alpha/2})2^{\alpha/2}, \quad \text{if $\alpha$ is even}, \\ 
N(2^{(\alpha+1)/2})2^{(\alpha+1)/2}, \quad \text{if $\alpha$ is odd.}
 \end{cases}
\end{align*}
\end{lemma}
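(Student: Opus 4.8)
The statement is a bound for a complete multiplicative character sum to a prime-power modulus, and the natural approach is the $p$-adic linearisation of $\chi$ (as in Postnikov and Burgess), which reduces such a sum to counting the zeros of the logarithmic derivative of $f(x)=\frac{(x+v_1)(x+v_2)}{(x+v_3)(x+v_4)}$. Throughout I would restrict the sum $S:=\sum_{x\bmod p^\alpha}\chi(f(x))$ to those $x$ for which $f(x)$ is a unit, set $g(x)=(\log f)'(x)=\frac{1}{x+v_1}+\frac{1}{x+v_2}-\frac{1}{x+v_3}-\frac{1}{x+v_4}$, and note that the zeros of $g$ coincide with those of the quadratic numerator of $F'(x)$ in~\eqref{F derivative}, i.e. of the quadratic congruence counted by $N(\cdot)$. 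The one ingredient I would isolate first is the additive approximation: for $p$ odd, $\chi$ primitive $\pmod{p^\alpha}$, and any $\beta$ with $2\beta\ge\alpha$, the map $t\mapsto 1+p^\beta t$ is a homomorphism of $\Z/p^{\alpha-\beta}\Z$ into $(\Z/p^\alpha\Z)^\ast$ (the cross term $p^{2\beta}$ vanishing $\pmod{p^\alpha}$), so that $\chi(1+p^\beta t)=e_{p^{\alpha-\beta}}(ct)$ for some $c$ with $(c,p)=1$ (by primitivity), where $e_m(z)=e^{2\pi iz/m}$.

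For $\alpha=2\beta$ even this already finishes the argument. Writing $x=u+p^\beta v$ with $u,v$ running over residues $\pmod{p^\beta}$, the exact factorisation $f(u+p^\beta v)\equiv f(u)\bigl(1+p^\beta v\,g(u)\bigr)\pmod{p^\alpha}$ gives $\chi(f(x))=\chi(f(u))\,e_{p^\beta}(cv\,g(u))$; summing over $v$ by orthogonality collapses the sum onto those $u$ with $g(u)\equiv0\pmod{p^\beta}$, each contributing $p^\beta$. This yields $|S|\le p^{\alpha/2}N(p^{\alpha/2})$.

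The odd case $\alpha=2\beta+1$ is where the real work lies, and I expect it to be the main obstacle. Now $2\beta=\alpha-1<\alpha$, so the substitution $x=u+p^\beta v$ no longer linearises: the second-order term $\tfrac12 p^{2\beta}v^2 g_2(u)$, $g_2=(\log f)''$, survives $\pmod{p^\alpha}$, and the $v$-sum becomes a quadratic Gauss sum $\sum_{v}e_{p^{\beta+1}}\!\bigl(cv\,g(u)+\tfrac{c}{2}p^\beta g_2(u)\,v^2\bigr)$ whose quadratic coefficient is divisible by $p^\beta$. The plan is to evaluate this Gauss sum: the linear term forces $g(u)\equiv0\pmod{p^\beta}$ for any nonzero contribution, and for such $u$ one separates the nondegenerate $u$ (with $g_2(u)\not\equiv0\pmod p$), where square-root cancellation bounds the sum by $p^{\beta+1/2}$ and the count of admissible $u$ is $N(p^\beta)$, from the degenerate $u$ (higher-order vanishing), which must be counted to the finer modulus $p^{\beta+1}$ and are controlled by $N(p^{\beta+1})$. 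Assembling the two contributions produces the stated bound $N(p^{(\alpha-1)/2})p^{\alpha/2}+N(p^{(\alpha+1)/2})p^{(\alpha-1)/2}$. The delicate points are the uniform evaluation of the degenerate Gauss sum and the bookkeeping that matches the two regimes to $N(p^{(\alpha-1)/2})$ and $N(p^{(\alpha+1)/2})$.

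Finally, for $p=2$ the group $(\Z/2^\alpha\Z)^\ast\cong\Z/2\times\Z/2^{\alpha-2}$ and the non-invertibility of $2$ break the ``completing the square'' step. I would instead use the cruder but exact linearisation, valid once $2\beta\ge\alpha+1$: shifting by $2^{\lceil(\alpha+1)/2\rceil}$ kills all higher-order terms outright, so no Gauss sum appears and the $v$-sum again collapses onto a congruence for $g$. In the even case this recovers $N(2^{\alpha/2})2^{\alpha/2}$, while in the odd case the absence of the refined square-root saving leaves the single term $N(2^{(\alpha+1)/2})2^{(\alpha+1)/2}$, as claimed.
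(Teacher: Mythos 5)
First, a point of comparison: the paper does not prove this lemma at all --- it is quoted directly from Burgess~\cite[Lemmas~2--4]{Burg1} --- so the benchmark is Burgess's original argument, which is precisely the linearisation method you propose. Your even-$\alpha$, odd-$p$ case is correct and complete: the substitution $x=u+p^\beta v$, the exact factorisation (valid since $2\beta\ge\alpha$), and the collapse onto $g(u)\equiv0\pmod{p^\beta}$ all work as you say, and the zeros of $g$ are indeed those of the quadratic numerator of \eqref{F derivative}. But in the two remaining cases there are genuine gaps, not merely ``delicate points.'' For odd $p$ and $\alpha=2\beta+1$, run your Gauss-sum computation to the end: the nondegenerate $u$ give $N(p^\beta)p^{\beta+1/2}$ as you claim, but each degenerate $u\bmod p^\beta$ contributes the full $p^{\beta+1}$ (the inner sum degenerates to $p^\beta\cdot p$), and bounding the number of degenerate $u$ naively by $N(p^{\beta+1})$ yields $N(p^{\beta+1})p^{\beta+1}$ --- a factor $p$ worse than the stated $N(p^{(\alpha+1)/2})p^{(\alpha-1)/2}$. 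The missing idea is structural: writing $h(x)=Lx^2+2Mx+N$ for the numerator of \eqref{F derivative}, degeneracy means exactly $h(u)\equiv0\pmod{p^{\beta+1}}$ \emph{and} $h'(u)\equiv0\pmod p$, and the second condition forces every one of the $p$ lifts $u+p^\beta w$ to solve the congruence modulo $p^{\beta+1}$; hence the number of degenerate $u\bmod p^\beta$ is at most $N(p^{\beta+1})/p$, which restores the stated bound. Without this lifting observation your assembly does not produce the second term.

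The $p=2$ case is also not correct as sketched, for two reasons. Your premise that one needs $2\beta\ge\alpha+1$ is mistaken: for an integer polynomial, $f(u+h)=\sum_i h^i f_i(u)$ with $f_i=f^{(i)}/i!\in\Z[x]$ (divided differences are integral), so no factor $\tfrac12$ ever appears, the quadratic term $2^{2\beta}v^2f_2(u)$ dies modulo $2^{2\beta}$, and the even case runs verbatim as for odd $p$ with the shift $2^{\alpha/2}$, giving $N(2^{\alpha/2})2^{\alpha/2}$; your enlarged shift $2^{\alpha/2+1}$ instead produces the count $N(2^{\alpha/2-1})$ at the wrong modulus. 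More seriously, in the odd case $\alpha=2\beta+1$ your shift $2^{\beta+1}$ collapses the $v$-sum onto $g(u)\equiv0\pmod{2^{\beta}}$ only, yielding $N(2^{(\alpha-1)/2})2^{(\alpha+1)/2}$ --- and since a root modulo $2^{\beta}$ need not lift to $2^{\beta+1}$, the quantity $N(2^{(\alpha-1)/2})$ is \emph{not} majorized by $N(2^{(\alpha+1)/2})$ (the count can strictly drop as the modulus grows), so your bound neither matches nor implies the stated one. To reach modulus $2^{\beta+1}$ one must exploit cancellation between the two lifts $u_0$ and $u_0+2^{\beta}$ inside the collapsed sum: primitivity makes $\chi$ nontrivial on $1+2^{\alpha-1}\Z$, so the pair contributes $\chi(f(u_0))\bigl(1+(-1)^{s(u_0)}\bigr)$ for an explicit integer $s(u_0)$, vanishing unless a congruence modulo $2^{\beta+1}$ holds --- this extra step, absent from your sketch, is what upgrades the modulus. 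Finally, note that the base case $\alpha=1$ admits no splitting at all and rests on Weil's bound (this is Burgess's Lemma~2), which your linearisation machinery cannot produce; your proposal is silent on it.
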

The following Lemma is due to Huxley~\cite{Hu}, see~\cite[Section 3]{St} for related results.
\begin{lemma}
\label{sandor}
Let $F(x)\in \mathbb{Z}[x]$ be a  polynomial of degree $r\ge2$ and let $\Delta$ denote the discriminant of $F$.  For prime $p$ and integer $\alpha$, let  $N(F,p^{\alpha})$ denote the number of solutions to the congruence
$$F(x)\equiv 0 \pmod {p^{\alpha}}, \quad 1\le x \le p^{\alpha}.$$
Then if $\Delta \neq 0$ we have
$$N(F,p^{\alpha})\le r(p^{\alpha},\Delta)^{1/2}.$$
\end{lemma}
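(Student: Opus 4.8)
The plan is to argue $p$-adically and induct on $\alpha$, classifying the solutions by their residue modulo $p$. I would first dispose of two routine reductions: after dividing $F$ by the $\gcd$ of its coefficients we may assume $p$ does not divide all of them, and the degenerate case $p \mid a_r$ (where $\deg(F \bmod p) < r$) can be separated off, so that it suffices to treat the case where the leading coefficient $a_r$ is a $p$-adic unit. Then $F$ splits over $\overline{\Q}_p$ as $F(x) = a_r\prod_{i=1}^r(x - \theta_i)$ with the roots $\theta_i \in \overline{\Z}_p$ pairwise distinct, distinctness being precisely the hypothesis $\Delta \neq 0$. Writing $v$ for the $p$-adic valuation and $\rho = v(\Delta)$, the identity $\Delta = a_r^{2r-2}\prod_{i<j}(\theta_i - \theta_j)^2$ gives $\rho = 2\sum_{i<j} v(\theta_i - \theta_j)$ and $(p^\alpha, \Delta)^{1/2} = p^{\min(\alpha,\rho)/2}$; thus the goal becomes $N(F, p^\alpha) \le r\, p^{\min(\alpha,\rho)/2}$, and solving $F \equiv 0 \pmod{p^\alpha}$ amounts to counting $a \bmod p^\alpha$ with $\sum_i v(a - \theta_i) \ge \alpha$.

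Next I would separate simple from multiple roots modulo $p$. A residue $b$ with $F(b) \equiv 0$ but $F'(b) \not\equiv 0 \pmod p$ is a simple root of $F \bmod p$ and, by Hensel's lemma, lifts to exactly one solution modulo $p^\alpha$; there are at most $\deg(F \bmod p) \le r$ residues that are roots, so these contribute at most $r$, which already suffices when $\rho = 0$. The substantive case is a residue $b$ that is a multiple root, i.e. $F(b) \equiv F'(b) \equiv 0 \pmod p$ — such residues occur only because $p \mid \Delta$. For each of these I would rescale, writing $x = b + p u$ with $u$ running modulo $p^{\alpha-1}$: the polynomial $F(b + pu)$ has content an exact power $p^{e}$ with $1 \le e \le r$, and setting $F_b(u) = p^{-e}F(b + pu) \in \Z[u]$ identifies the solutions $a \equiv b \pmod p$ of $F \equiv 0 \pmod{p^\alpha}$ with the solutions of $F_b(u) \equiv 0 \pmod{p^{\alpha - e}}$. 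Since $\deg F_b = r$, this sets up an induction on $\alpha$.

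The crux is the bookkeeping that keeps the bound from degrading under this recursion. The substitution $x \mapsto b + pu$ sends the roots to $(\theta_i - b)/p$ and hence multiplies the discriminant by $p^{r(r-1)}$, while dividing by the content $p^{e}$ divides it by $p^{e(2r-2)}$, so that $v(\operatorname{disc} F_b) = \rho + (r-1)(r - 2e)$; one must check that the combined change in $\alpha$ and in the discriminant valuation, together with the drop in the degree of $F_b \bmod p$ (which records the multiplicity $m_b$ of $b$, with $\sum_b m_b$ bounded by $\deg(F \bmod p) \le r$), exactly balances to preserve $r\, p^{\min(\alpha,\rho)/2}$. This is where I expect the main difficulty: a naive summation over the multiple residues $b$ reintroduces a factor $\deg F$ at each level and would blow the bound up, so the inductive hypothesis must be carried with the reduction degree (equivalently, the multiplicities must be threaded through the recursion) rather than with the crude constant $r$. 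The degree-two case relevant to Lemma~\ref{bur} is clean and illustrative: $F \bmod p$ has at most one double root $b$, the rescaling takes $(\alpha,\rho)$ to $(\alpha - 2, \rho - 2)$ whenever solutions survive, and induction closes immediately with room to spare, the base case $\alpha = 1$ being the trivial bound $N(F, p) \le \deg(F \bmod p) \le r$. I would finally check the two regimes $\alpha \le \rho$ and $\alpha > \rho$ separately when assembling the estimate.
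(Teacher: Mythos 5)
First, a point of reference: the paper does not prove Lemma~\ref{sandor} at all --- it is quoted from Huxley~\cite{Hu}, with a pointer to~\cite[Section~3]{St} --- so your attempt can only be judged against the statement itself and the cited literature, not against an in-paper argument. Your framework (reduce to content prime to $p$ and unit leading coefficient, Hensel lifting at simple residues, rescaling $x=b+pu$ at multiple residues, dividing by the content $p^{e}$, inducting on $\alpha$) is the standard elementary route, and your discriminant bookkeeping $v(\operatorname{disc}F_b)=\rho+(r-1)(r-2e)$ is correct. But there is a local counting error: the passage from $F\equiv 0\pmod{p^{\alpha}}$, $x\equiv b\pmod p$, to $F_b(u)\equiv 0\pmod{p^{\alpha-e}}$ is not an identification of solution counts. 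Since $x$ runs modulo $p^{\alpha}$, the variable $u$ runs modulo $p^{\alpha-1}$ while the condition constrains $u$ only modulo $p^{\alpha-e}$, so the correct relation is $N_b=p^{\,e-1}N(F_b,p^{\alpha-e})$. For $r=2$ with unit leading coefficient one has $e\le 2$; the case $e=1$ forces $F_b\bmod p$ to be a nonzero constant (no solutions survive), and $e=2$ gives $N_b=p\,N(F_b,p^{\alpha-2})$ with $(\alpha,\rho)\mapsto(\alpha-2,\rho-2)$, so the induction closes \emph{exactly}, with no room to spare: your ``room to spare'' is an artifact of the dropped factor $p^{e-1}$.

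More seriously, the difficulty you flag in the general-$r$ bookkeeping is not a difficulty but an impossibility: the lemma as printed is false for $r\ge 3$, so no threading of multiplicities can make the recursion balance. Take $F(x)=x^{3}-p^{3}$ with $p\ge 11$: it is monic with content $1$ and $\Delta=-27p^{6}\neq 0$, and $F(x)\equiv 0\pmod{p^{3}}$ holds if and only if $p\mid x$, whence $N(F,p^{3})=p^{2}$, while $r(p^{\alpha},\Delta)^{1/2}=3p^{3/2}<p^{2}$. In your recursion this is exactly the regime $e=3$, where the factor $p^{e-1}$ overwhelms the allowed $p^{e/2}$. The correct form of Huxley's result carries $p^{\delta/2}$ with $p^{\delta}\,\|\,\Delta$, \emph{not} truncated to $(p^{\alpha},\Delta)^{1/2}$; relatedly, your opening ``routine'' reduction of dividing out the content is not available for the truncated statement either (for odd $p\ge 5$, $F=p(x^{2}+1)$ with $\alpha=1$ has $N=p$ but $2(p,\Delta)^{1/2}=2p^{1/2}$). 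The honest summary is: your sketch, once corrected by the factor $p^{e-1}$, genuinely proves the truncated bound in the case $r=2$ with content prime to $p$ --- which is the only case the paper ever uses, since the polynomial fed into Lemma~\ref{bur} is quadratic --- but the general statement should either be weakened to $N(F,p^{\alpha})\le r\,p^{\delta/2}$ or restricted to $r=2$, and the paper's unproved citation of it in this form deserves a flag.
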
 
Using the proof of~\cite[Lemma~7]{Burg1} with Lemma~\ref{bur} and Lemma~\ref{sandor} gives,
\begin{lemma}
\label{Burgess}
For integer $q$, let $\chi$ be a primitive character$\pmod q$ and suppose the integers $v_1,v_2,v_3,v_4$  satisfy
$$\Delta=(v_1-v_3)(v_1-v_4)(v_2-v_3)(v_2-v_4)\neq 0.$$
 Then if 
$$v_1+v_2\neq v_3+v_4,$$
we have
$$\left|\sum_{\lambda=1}^{q}\chi \left(\frac{(\lambda+v_1)(\lambda+v_2)}{(\lambda+v_3)(\lambda+v_4)}\right)\right|\le (q,\Delta)^{1/2}q^{1/2+o(1)},$$
and if 
$$v_1+v_2=v_3+v_4,$$
we have
$$\left|\sum_{\lambda=1}^{q}\chi \left(\frac{(\lambda+v_1)(\lambda+v_2)}{(\lambda+v_3)(\lambda+v_4)}\right)\right|\le (q,(v_1-v_4)(v_2-v_4))q^{1/2+o(1)}.$$
\end{lemma}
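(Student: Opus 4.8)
The plan is to exploit the multiplicativity of the character sum, reduce to prime-power moduli, and estimate each local factor by feeding Lemma~\ref{sandor} into Lemma~\ref{bur}; this is the strategy of~\cite[Lemma~7]{Burg1}. Concretely, I would factor $q=\prod_{p^{\alpha}\| q}p^{\alpha}$ and use the Chinese Remainder Theorem together with the decomposition $\chi=\prod_{p^{\alpha}\| q}\chi_{p^{\alpha}}$ of the primitive character $\chi$ into primitive characters $\chi_{p^{\alpha}}\pmod{p^{\alpha}}$. This gives
$$\sum_{\lambda=1}^{q}\chi\left(\frac{(\lambda+v_1)(\lambda+v_2)}{(\lambda+v_3)(\lambda+v_4)}\right)=\prod_{p^{\alpha}\| q}\left(\sum_{\lambda=1}^{p^{\alpha}}\chi_{p^{\alpha}}\left(\frac{(\lambda+v_1)(\lambda+v_2)}{(\lambda+v_3)(\lambda+v_4)}\right)\right),$$
where inside each local sum the terms with $p\mid(\lambda+v_3)(\lambda+v_4)$ contribute nothing. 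It then suffices to bound each local factor and multiply the resulting estimates.

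To estimate the local factor I would apply Lemma~\ref{bur}, which controls it in terms of $N(p^{\beta})$ for $\beta\in\{\alpha/2,(\alpha\pm1)/2\}$, the number of roots modulo $p^{\beta}$ of the quadratic congruence appearing there. In the case $v_1+v_2\neq v_3+v_4$ the leading coefficient $L$ from~\eqref{LMN} is nonzero, so the congruence is genuinely quadratic and its discriminant is $4\Delta$ (this is the discriminant computation carried out in the proof of Lemma~\ref{int bound 1}; see~\eqref{F derivative} and~\eqref{delta equation}). Since $\Delta\neq0$ this discriminant is nonzero, so Lemma~\ref{sandor} with $r=2$ applies and gives $N(p^{\beta})\le 2(p^{\beta},4\Delta)^{1/2}\ll(p^{\beta},\Delta)^{1/2}$. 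As $\beta\le\alpha$ we have $(p^{\beta},\Delta)\le(p^{\alpha},\Delta)$, so in every parity branch of Lemma~\ref{bur} the local factor is $\ll(p^{\alpha},\Delta)^{1/2}p^{\alpha/2}$ with an absolute implied constant.

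In the remaining case $v_1+v_2=v_3+v_4$ the coefficient $L$ vanishes and the congruence degenerates to the linear one $M(x+v_1+v_2)\equiv0$, where $M=v_3v_4-v_1v_2=-(v_1-v_4)(v_2-v_4)$ is nonzero because $\Delta\neq0$. Here $N(p^{\beta})$ is the number of roots of this linear congruence, namely $(p^{\beta},(v_1-v_4)(v_2-v_4))\le(p^{\alpha},(v_1-v_4)(v_2-v_4))$, and Lemma~\ref{bur} yields the local bound $\ll(p^{\alpha},(v_1-v_4)(v_2-v_4))\,p^{\alpha/2}$. Multiplying the local estimates over $p^{\alpha}\| q$ and using $\prod_{p^{\alpha}\| q}(p^{\alpha},\Delta)=(q,\Delta)$ and $\prod_{p^{\alpha}\| q}p^{\alpha}=q$, the product of the first-case bounds is at most $C^{\omega(q)}(q,\Delta)^{1/2}q^{1/2}$; since $\omega(q)=O(\log q/\log\log q)$ the factor $C^{\omega(q)}$ is $q^{o(1)}$, giving the claimed $(q,\Delta)^{1/2}q^{1/2+o(1)}$, and the second case recombines identically to $(q,(v_1-v_4)(v_2-v_4))q^{1/2+o(1)}$.

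I expect the main obstacle to be the local analysis at the degenerate primes, namely $p=2$ and the primes dividing the leading coefficient $L$, where the quadratic drops degree modulo $p$: one must use that Lemma~\ref{sandor} bounds the root count through the fixed integer discriminant $4\Delta$ rather than through the reduction of the polynomial modulo $p$, and one must invoke the separate $p=2$ branch of Lemma~\ref{bur}. The other points requiring care are confirming the discriminant relation in~\eqref{delta equation} and the factorization $M=-(v_1-v_4)(v_2-v_4)$ in the degenerate case, both of which are short but essential for matching the gcd quantities in the two conclusions.
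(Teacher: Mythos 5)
Your proposal is correct and follows essentially the same route as the paper: reduction to prime-power moduli via the Chinese Remainder Theorem and the factorization $\chi=\chi_1\cdots\chi_k$, local bounds from Lemma~\ref{bur} with the root counts $N(p^{\beta})$ controlled by Lemma~\ref{sandor} through the discriminant $4\Delta\neq 0$ in the non-degenerate case, the linear congruence with $M=-(v_1-v_4)(v_2-v_4)$ giving the gcd count in the degenerate case, and recombination with $C^{\omega(q)}=q^{o(1)}$. Your handling of the parity branches (noting $(p^{\beta},\Delta)\le(p^{\alpha},\Delta)$ for $\beta\le\alpha$) and of the degenerate factorization is exactly the paper's argument, so nothing further is needed.
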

\begin{proof}
Consider first when
\begin{equation}
\label{assumption 1  L9}
v_1+v_2\neq v_3+v_4,
\end{equation}
then if $\chi$ is a primitive character$\pmod {p^{\alpha}},$ as in~\cite[Lemma~5]{Burg1}, since the discriminant of the polynomial
\begin{equation}
\label{polyonmial congruence bound}
((v_1+v_2)-(v_3+v_4))x^2+(v_3v_4-v_1v_2)x+(v_1+v_2)v_3v_4-(v_3+v_4)v_1v_2,
\end{equation}
is
$$4\Delta=4(v_1-v_3)(v_1-v_4)(v_2-v_3)(v_2-v_4),$$
we see from Lemma~\ref{bur} and Lemma~\ref{sandor} that
\begin{equation}
\label{b bound 1}
\left|\sum_{n=1}^{p^{\alpha}}\chi\left(\frac{(x+v_1)(x+v_2)}{(x+v_3)(x+v_4)}\right)\right|\ll (p^{\alpha},\Delta)^{1/2} p^{\alpha/2},
\end{equation}
which under the assumption~\eqref{assumption 1 L9} gives the desired result when $q$ is a prime power. For the general case, suppose $\chi$ is a primitive character$\pmod q$ and let $q=p_1^{\alpha_1}\dots p_k^{\alpha_k}$ be the prime factorization of $q$. By the Chinese remainder theorem, there exists  $\chi_1,\dots,\chi_k$, where each $\chi_i$ is a primitive character$\pmod {p_i^{\alpha_i}}$ such that
$$\chi=\chi_1\dots\chi_k.$$
Let 
$$F(x)=\frac{(x+v_1)(x+v_2)}{(x+v_3)(x+v_4)},$$
and writing $q_i=q/p_i^{\alpha_i}$ we have
\begin{align}
\label{CRT}
\sum_{n=1}^{p^{\alpha}}\chi \left(F(n)\right)&=\sum_{n_1=1}^{p_1^{\alpha_1}}\dots \nonumber \sum_{n_k=1}^{p_k^{\alpha_k}}\chi_1(F(n_1q_1+\dots n_kq_k))\dots \chi_k(F(n_1q_1+\dots n_kq_k)) \\
&=\prod_{i=1}^{k}\left(\sum_{n_i=1}^{p_i^{\alpha_i}}\chi_i(F(n_iq_i))  \right)=\prod_{i=1}^{k}\left(\sum_{n_i=1}^{p_i^{\alpha_i}}\chi_i(F(n_i))\right).
\end{align}
Letting $\omega(q)$ denote the number of distinct prime factors of $q$, by~\eqref{b bound 1} and~\eqref{CRT} we have for some absolute constant $C$,
$$\left|\sum_{n=1}^{p^{\alpha}}\chi \left(F(n)\right)\right|\le C^{\omega(q)}(q,\Delta)^{1/2}q^{1/2}\le (q,\Delta)^{1/2}q^{1/2+o(1)}.$$
Next suppose
$$v_1+v_2=v_3+v_4,$$
so that~\eqref{polyonmial congruence bound} becomes
$$(2x+v_1+v_2)(v_1-v_4)(v_2-v_4).$$
For $\chi$ be a primitive character$\pmod {p^{\alpha}}$, since the number of solutions to the congruence
$$(2x+v_1+v_2)(v_1-v_4)(v_2-v_4)\equiv 0 \pmod {p^{\alpha}}, \quad 1\le x \le p^{\alpha},$$
is bounded by 
$$(p^{\alpha},2(v_4-v_1)(v_4-v_2))\le 2(p^{\alpha},(v_1-v_4)(v_2-v_4)),$$
 we have from  Lemma~\ref{bur}
$$\left|\sum_{n=1}^{p^{\alpha}}\chi\left(\frac{(x+v_1)(x+v_2)}{(x+v_3)(x+v_4)}\right)\right|\ll (p^{\alpha},(v_1-v_4)(v_2-v_4)) p^{\alpha/2},$$
so that using the Chinese remainder theorem as above gives
$$\left|\sum_{n=1}^{p^{\alpha}}\chi \left(F(n)\right)\right|\le (q,(v_1-v_4)(v_2-v_4))q^{1/2+o(1)}. $$
\end{proof}
\begin{lemma}
\label{double mean value}
For integer $q$, let  $\chi$ be a primitive character$\pmod q$ and let $A,B,V$ be real numbers satisfying $$0\le A<B, \quad  B \ll V,$$ and let $t\ge 1$. For any real number $\alpha$ we have
\begin{align*}
& \int_{A}^{B}\sum_{\lambda=1}^{q}\max_{V/2<Q\le V}\left|\sum_{V/2<v\le Q}\chi(\lambda+v)(x+v)^{it}e^{2\pi i \alpha v} \right|^4dx \le \\ & \quad \quad \quad \quad \left(qV^3+\frac{q^{1/2}V^{5}}{t^{1/2}}\right)(qV)^{o(1)}.
\end{align*}
\end{lemma}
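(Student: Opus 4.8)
The plan is to remove the maximum over $Q$, expand the resulting fixed-endpoint fourth moment, carry out the sum over $\lambda$ and the integral over $x$ separately using Lemma~\ref{Burgess} and Lemmas~\ref{int bound 1}--\ref{int bound 2}, and finally sum over the four variables $v_1,v_2,v_3,v_4$. To remove the maximum over $Q$ I would complete the inner sum (rather than appeal to the Sobolev--Gallagher inequality of Lemma~\ref{sg}). Since $v$ runs over integers in an interval of length $\asymp V$, I write the sharp cutoff $\mathbf 1_{V/2<v\le Q}$ as a finite Fourier series modulo an integer $L\asymp V$, obtaining
$$\sum_{V/2<v\le Q}\chi(\lambda+v)(x+v)^{it}e^{2\pi i\alpha v}=\sum_{|h|\le L/2}c_h(Q)\sum_{V/2<v\le V}\chi(\lambda+v)(x+v)^{it}e^{2\pi i(\alpha+h/L)v},$$
where $\max_Q|c_h(Q)|\ll\min(1,1/|h|)$ and hence $\sum_{|h|\le L/2}\max_Q|c_h(Q)|\ll\log V$. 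Taking the maximum over $Q$, raising to the fourth power and applying H\"older's inequality against the weights $\max_Q|c_h(Q)|$ shows, up to a factor $(qV)^{o(1)}$, that it suffices to bound
$$\int_{A}^{B}\sum_{\lambda=1}^{q}\left|\sum_{V/2<v\le V}\chi(\lambda+v)(x+v)^{it}e^{2\pi i\beta v}\right|^4dx$$
uniformly in the real number $\beta$; this uniformity is exactly why the lemma is stated for arbitrary $\alpha$, the completion frequencies $h/L$ being absorbed into $\beta=\alpha+h/L$.

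Next I would expand the fourth power and integrate, writing the fixed-endpoint moment as
$$\sum_{V/2<v_1,v_2,v_3,v_4\le V}e^{2\pi i\beta(v_1+v_2-v_3-v_4)}\left(\sum_{\lambda=1}^{q}\chi\!\left(\frac{(\lambda+v_1)(\lambda+v_2)}{(\lambda+v_3)(\lambda+v_4)}\right)\right)\int_{A}^{B}\left(\frac{(x+v_1)(x+v_2)}{(x+v_3)(x+v_4)}\right)^{it}\!dx,$$
and bounding each of the two inner factors in absolute value. On the diagonal, where $\{v_1,v_2\}=\{v_3,v_4\}$ as multisets, the rational function is identically $1$, the character sum is at most $q$ and the integral is $B-A\ll V$; since there are $O(V^2)$ such quadruples the diagonal contributes $O(qV^3)$, the first term of the claimed bound.

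For the off-diagonal quadruples with $\Delta\neq0$ I would combine Lemma~\ref{Burgess} with Lemma~\ref{int bound 1}. When $v_1+v_2\neq v_3+v_4$ these give the factors $(q,\Delta)^{1/2}q^{1/2+o(1)}$ and $V^2t^{-1/2}|\Delta|^{-1/4}$, so this range contributes at most
$$\frac{q^{1/2}V^2}{t^{1/2}}\,(qV)^{o(1)}\sum_{V/2<v_1,v_2,v_3,v_4\le V}(q,\Delta)^{1/2}|\Delta|^{-1/4}.$$
When $v_1+v_2=v_3+v_4$ the factors are instead $(q,(v_1-v_4)(v_2-v_4))\,q^{1/2+o(1)}$ and $V^4t^{-1}|(v_1-v_4)(v_2-v_4)|^{-1}$; parametrising this locus by three free variables and using $t\ge1$ one finds its contribution is at most $q^{1/2}V^5t^{-1/2}(qV)^{o(1)}$. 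The quadruples with $\Delta=0$ but $\{v_1,v_2\}\neq\{v_3,v_4\}$ have exactly one coincidence among the four cross-differences, so the rational function collapses to a single M\"obius factor; here Lemma~\ref{int bound 2} and the (small) degree-one character sum again give a contribution dominated by the two main terms. Granting the key estimate
$$\sum_{V/2<v_1,v_2,v_3,v_4\le V}(q,\Delta)^{1/2}|\Delta|^{-1/4}\ll V^3(qV)^{o(1)},$$
the off-diagonal range contributes $q^{1/2}V^5t^{-1/2}(qV)^{o(1)}$, the second term, and the lemma follows.

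The main obstacle is this last estimate. The weight $|\Delta|^{-1/4}$ alone sums to $O(V^3)$: fixing $v_1$ and summing the remaining variables one at a time, each one-variable sum has the form $\sum_{|k|\le V}|k|^{-1/4}\ll V^{3/4}$ or $\sum_{|k|\le V}|k|^{-1/2}\ll V^{1/2}$ (the latter reached by the arithmetic--geometric mean inequality applied to $|\Delta|^{-1/4}$), and the four factors multiply to $V^3$. The genuinely delicate point is that the arithmetic weight $(q,\Delta)^{1/2}$ must cost only $(qV)^{o(1)}$ on average. Writing $(q,\Delta)^{1/2}=\sum_{d\mid q,\;d\mid\Delta}f(d)$ with $f=\mathrm{id}^{1/2}*\mu$, so that $|f(d)|\le d^{1/2}$, reduces this to counting quadruples for which a given $d\mid q$ divides the product $\Delta$ of the four linear forms $v_1-v_3$, $v_1-v_4$, $v_2-v_3$, $v_2-v_4$, weighted by $|\Delta|^{-1/4}$. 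Because these forms share the variables $v_1,v_2,v_3,v_4$ pairwise, the divisibility conditions do not simply factor, and the expected saving $1/d$ is partly lost when two conditions fall on the same variable; controlling this is precisely where localising the $v_i$ to a short interval of length $C$ (as in the reduction announced in the introduction) and invoking a bilinear congruence count of the type of Lemma~\ref{congruence} becomes essential. Keeping the gcd losses, the small-$\Delta$ ranges and the partially degenerate loci simultaneously under control is the crux; the rest is the mechanical assembly of Lemmas~\ref{Burgess}, \ref{int bound 1} and~\ref{int bound 2} described above.
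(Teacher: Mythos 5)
Your completion-of-sums device for removing the maximum over $Q$ is a legitimate alternative to what the paper actually does: the paper first proves a fixed-endpoint bound proportional to the length $C$ of the range of $v$, and then removes the maximum by Heath-Brown's binary-expansion argument (writing the maximising endpoint $Q_{\lambda,x}$ in binary, applying H\"older over the $O(\log V)$ dyadic blocks, and summing over all block positions), not by Lemma~\ref{sg}, which is used only in the proof of Theorem~\ref{main1}. Since your completed sums run over the full range $(V/2,V]$ and the expanded fourth moment is uniform in the frequency $\beta$ (the phase $e^{2\pi i\beta(v_1+v_2-v_3-v_4)}$ disappears on taking absolute values), either route costs only $(qV)^{o(1)}$ and they deliver the same bound. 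Your subsequent case decomposition and your pairing of Lemma~\ref{Burgess} with Lemmas~\ref{int bound 1} and~\ref{int bound 2} coincide with the paper's treatment of its sets $\cV_3,\cV_4,\cV_5$; your handling of the equal-sum locus and of the degenerate $\Delta=0$ quadruples is correct in substance (minor quibble: a non-diagonal quadruple with $\Delta=0$ can have two coincidences, e.g.\ $v_1=v_2=v_3\neq v_4$, though the collapse to a degree-one M\"obius factor still holds, so nothing breaks).

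The genuine gap is exactly where you flag it: you \emph{grant} the key estimate
$$\sum_{V/2<v_1,v_2,v_3,v_4\le V}(q,\Delta)^{1/2}|\Delta|^{-1/4}\ll V^3(qV)^{o(1)},$$
and your proposed mechanism for it (a M\"obius expansion of $(q,\Delta)^{1/2}$ over $d\mid q$ combined with a bilinear congruence count of the type of Lemma~\ref{congruence}) is not carried out and points in the wrong direction --- you yourself observe that the divisibility conditions entangle across the shared variables and do not factor. The paper closes this by an elementary device your proposal lacks. First bound $(q,\Delta)^{1/2}\le\prod_{i=1,2,\,j=3,4}(q,v_i-v_j)^{1/2}$ and sort the quadruples into $d(q)^4=q^{o(1)}$ classes according to the values $d_{i,j}=(q,v_i-v_j)$. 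Then decouple the two factors sharing $v_1$ by the elementary inequality
$$\frac{(q,v_1-v_3)^{1/2}(q,v_1-v_4)^{1/2}}{|(v_1-v_3)(v_1-v_4)|^{1/4}}\le\frac{(q,v_1-v_3)}{|v_1-v_3|^{1/2}}+\frac{(q,v_1-v_4)}{|v_1-v_4|^{1/2}},$$
and finally count directly within each class: writing $v_1-v_3=d_{1,3}M_1$, $v_2-v_3=d_{2,3}M_2$, $v_2-v_4=d_{2,4}M_3$ leaves one free variable (at most $C\asymp V$ choices), and each one-variable sum of the form $d_{1,3}^{1/2}\sum_{M_1\le C/d_{1,3}}M_1^{-1/2}\ll C^{1/2}$ or $d_{2,3}^{1/4}\sum_{M_2\le C/d_{2,3}}M_2^{-1/4}\ll C^{3/4}$ shows that the gcd weights are exactly absorbed by the rarity of the divisibility conditions, giving $\ll C^3$ per divisor class. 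In particular no bilinear congruence count and no further short-interval localisation are needed --- the interval of summation itself suffices. Without this (or an equivalent) argument your proof is incomplete at its acknowledged crux; with it, your version of the lemma goes through.
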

\begin{proof}
We first show that for $C\le V/2$
\begin{align}
\label{remove max}
& \int_{A}^{B}\sum_{\lambda=1}^{q}\left|\sum_{V/2<v\le V/2+C}\chi(\lambda+v)(x+v)^{it}e^{2\pi i \alpha v} \right|^4dx\le  \\ & \quad \quad \quad \quad  C\left(qV^3+\frac{q^{1/2}V^{5}}{t^{1/2}}\right)(qV)^{o(1)}, \nonumber
\end{align}
 then we complete the proof using the argument of Heath-Brown~\cite[Section~5]{HB2}.
 Expanding the inner sum in~\eqref{remove max}
\begin{align*}
& \int_{A}^{B}\sum_{\lambda=1}^{q}\left|\sum_{V/2<v\le V/2+C}\chi(\lambda+v)(x+v)^{it}e^{2\pi i \alpha v} \right|^4dx \le
\\ &
\sum_{V/2<v_1,v_2,v_3,v_4 \le V/2+C}\left|\int_{A}^{B}\left(\frac{(x+v_1)(x+v_2)}{(x+v_3)(x+v_4)} \right)^{it}dx \right|\left|\sum_{\lambda=1}^{q}\chi \left(\frac{(\lambda+v_1)(\lambda+v_2)}{(\lambda+v_3)(\lambda+v_4)} \right)  \right|.
\end{align*}
We break the outer summation over $(v_1,v_2,v_3, v_4)$ into two sets. In the first set $\cV_1$, we put all $(v_1,v_2,v_3,v_4)$ that contain at most two distinct integers and in the second set $\cV_2$, we put the remaining $(v_1,v_2,v_3,v_4)$. Estimating the inner sum and integral trivially for the first set, since $$C\le V/2, \quad 0\le A<B\ll V,$$ we get
\begin{align*}
& \int_{A}^{B}\sum_{\lambda=1}^{q}\left|\sum_{V/2<v\le V/2+C}\chi(\lambda+v)(x+v)^{it} \right|^4dx \ll \\
& \  CqV^2+\sum_{(v_1,v_2,_3,v_4)\in \cV_2}\left|\int_{0}^{B}\left(\frac{(x+v_1)(x+v_2)}{(x+v_3)(x+v_4)} \right)^{it}dx \right|\left|\sum_{\lambda=1}^{q}\chi \left(\frac{(\lambda+v_1)(\lambda+v_2)}{(\lambda+v_3)(\lambda+v_4)} \right)  \right|.
\end{align*}
Writing
$$\Delta=(v_1-v_3)(v_1-v_4)(v_2-v_3)(v_2-v_4),$$
we split $\cV_2$ into three sets,
\begin{align*}
\cV_3&=\{ \  (v_1,v_2,v_3,v_4)\in \cV_2 : \ \Delta=0 \  \}, \\
\cV_4&=\{ \  (v_1,v_2,v_3,v_4)\in \cV_2 : \ \Delta \neq 0, \  v_1+v_2\neq v_3+v_4 \  \}, \\
\cV_5&=\{ \  (v_1,v_2,v_3,v_4)\in \cV_2 : \ \Delta \neq 0,\  v_1+v_2=v_3+v_4 \  \}.
\end{align*}
By Lemma~\ref{int bound 1} and Lemma~\ref{Burgess} we have
\begin{align*}
&\sum_{(v_1,v_2,_3,v_4)\in \cV_4}\left|\int_{A}^{B}\left(\frac{(x+v_1)(x+v_2)}{(x+v_3)(x+v_4)} \right)^{it}dx \right|\left|\sum_{\lambda=1}^{q}\chi \left(\frac{(\lambda+v_1)(\lambda+v_2)}{(\lambda+v_3)(\lambda+v_4)} \right)  \right| \ll \\
& \ \ \ \ \frac{q^{1/2+o(1)}V^2}{t^{1/2}}\sum_{(v_1,v_2,_3,v_4)\in \cV_4}\frac{(q,\Delta)^{1/2}}{|\Delta|^{1/4}}.
\end{align*}
Since
\begin{align*}
&\sum_{(v_1,v_2,_3,v_4)\in \cV_4}\frac{(q,\Delta)^{1/2}}{|\Delta|^{1/4}}=\sum_{(v_1,v_2,_3,v_4)\in \cV_4}\frac{(q,(v_1-v_3)(v_1-v_4)(v_2-v_3)(v_2-v_4))^{1/2}}{|(v_1-v_3)(v_1-v_4)(v_2-v_3)(v_2-v_4)|^{1/4}} \\
 &  \quad \quad \quad  \le \sum_{(v_1,v_2,_3,v_4)\in \cV_4}\frac{(q,(v_1-v_3))^{1/2}(q,(v_1-v_4))^{1/2}(q,(v_2-v_3))^{1/2}(q,(v_2-v_4))^{1/2}}{|(v_1-v_3)(v_1-v_4)(v_2-v_3)(v_2-v_4)|^{1/4}},
\end{align*}
we break the above sum into $q^{o(1)}$ sums of the form
\begin{align*}
\sum_{\substack{(v_1,v_2,_3,v_4)\in \cV_4 \\ (q,v_i-v_j)=d_{i,j} \\ i=1,2,  j=3,4}}\frac{(q,(v_1-v_3))^{1/2}(q,(v_1-v_4))^{1/2}(q,(v_2-v_3))^{1/2}(q,(v_2-v_4))^{1/2}}{|(v_1-v_3)(v_1-v_4)(v_2-v_3)(v_2-v_4)|^{1/4}},
\end{align*}
where each $d_{i,j}$ is a divisor of $q$. Since
$$\frac{(q,v_1-v_3)^{1/2}(q,(v_1-v_4))^{1/2}}{|(v_1-v_3)(v_1-v_4)|^{1/4}}\le\frac{(q,v_1-v_3)}{|(v_1-v_3)|^{1/2}}+
\frac{(q,v_1-v_4)}{|(v_1-v_4)|^{1/2}},
$$
we have
\begin{align*}
&\sum_{\substack{(v_1,v_2,_3,v_4)\in \cV_4 \\ (q,v_i-v_j)=d_{i,j} \\ i=1,2,  j=3,4}}\frac{(q,(v_1-v_3))^{1/2}(q,(v_1-v_4))^{1/2}(q,(v_2-v_3))^{1/2}(q,(v_2-v_4))^{1/2}}{|(v_1-v_3)(v_1-v_4)(v_2-v_3)(v_2-v_4)|^{1/4}} \\
& \quad \quad \quad \le 
\sum_{\substack{(v_1,v_2,_3,v_4)\in \cV_4 \\ (q,v_i-v_j)=d_{i,j} \\ i=1,2,  j=3,4}}\frac{d_{1,3}(d_{2,4}d_{2,3})^{1/2}}{|(v_1-v_3)|^{1/2}|(v_2-v_3)(v_2-v_4)|^{1/4}} \\
& \quad \quad \quad \quad \quad  +\sum_{\substack{(v_1,v_2,_3,v_4)\in \cV_4 \\ (q,v_i-v_j)=d_{i,j} \\ i=1,2,  j=3,4}}\frac{d_{1,4}(d_{2,4}d_{2,3})^{1/2}}{|(v_1-v_4)|^{1/2}|(v_2-v_3)(v_2-v_4)|^{1/4}}.
\end{align*}
Considering the first sum, for integers $M_1,M_2,M_3$ since the number of solutions to the equations
\begin{align*}
v_1-v_3&=d_{1,3}M_1, \\
v_2-v_3&=d_{2,3}M_2,\\
v_2-v_4&=d_{2,4}M_3, 
\end{align*}
with $(v_1,v_2,v_3,v_4) \in \cV_4$ is bounded by $C$, we have
\begin{align*}
&\sum_{\substack{(v_1,v_2,_3,v_4)\in \cV_4 \\ (q,v_i-v_j)=d_{i,j} \\ i=1,2,  j=3,4}}\frac{d_{1,3}(d_{2,4}d_{2,3})^{1/2}}{|(v_1-v_3)|^{1/2}|(v_2-v_3)(v_2-v_4)|^{1/4}}\\ & \quad \quad \quad \quad \quad \quad \quad \ll \left(d_{1,3}(d_{2,4}d_{2,3})^{1/2}\sum_{\substack{M_1\le C/d_{1,3} \\ M_2\le C/d_{2,3} \\ M_3\le C/d_{2,4}}}\frac{1}{(d_{1,3}M_1)^{1/2}(d_{2,3}M_2d_{2,4}M_3)^{1/4}}\right)C
\\ & \quad \quad \quad \quad \quad \quad \quad = \left(d_{1,3}^{1/2}(d_{2,4}d_{2,3})^{1/4}\sum_{\substack{M_1\le C/d_{1,3} \\ M_2\le C/d_{2,3} \\ M_3\le C/d_{2,4}}}\frac{1}{M_1^{1/2}(M_2M_3)^{1/4}}\right)C \\ & \quad \quad \quad \quad \quad \quad \quad  \ll C^3.
\end{align*}
A similar argument shows
$$\sum_{\substack{(v_1,v_2,_3,v_4)\in \cV_4 \\ (q,v_i-v_j)=d_{i,j} \\ i=1,2,  j=3,4}}\frac{d_{1,4}(d_{2,4}d_{2,3})^{1/2}}{|(v_1-v_4)|^{1/2}|(v_2-v_3)(v_2-v_4)|^{1/4}}\ll C^3,$$
and since $C\le V$, we get
\begin{align*}
& \sum_{(v_1,v_2,_3,v_4)\in \cV_4}\left|\int_{A}^{B}\left(\frac{(x+v_1)(x+v_2)}{(x+v_3)(x+v_4)} \right)^{it}dx \right|\left|\sum_{\lambda=1}^{q}\chi \left(\frac{(\lambda+v_1)(\lambda+v_2)}{(\lambda+v_3)(\lambda+v_4)} \right)  \right| \\
& \quad \quad \quad \quad \quad \quad \le\frac{Cq^{1/2+o(1)}V^4}{t^{1/2}}. 
\end{align*}
For summation over the set $\cV_5$, we have by Lemma~\ref{int bound 1} and Lemma~\ref{Burgess}
\begin{align*}
& \sum_{(v_1,v_2,_3,v_4)\in \cV_5}\left|\int_{A}^{B}\left(\frac{(x+v_1)(x+v_2)}{(x+v_3)(x+v_4)} \right)^{it}dx \right|\left|\sum_{\lambda=1}^{q}\chi \left(\frac{(\lambda+v_1)(\lambda+v_2)}{(\lambda+v_3)(\lambda+v_4)} \right)  \right|\ll \\ & \quad \quad 
\frac{V^4q^{1/2+o(1)}}{t} \sum_{(v_1,v_2,_3,v_4)\in \cV_5}\frac{(q,v_1-v_4)(q,v_2-v_4)}{|(v_1-v_4)(v_2-v_4)|},
\end{align*}
and for integers $M_1,M_2$ since the number of solutions to the equations
\begin{align*}
v_1-v_4&=M_1, \\
v_2-v_4&=M_2, \\
v_1+v_2&=v_3+v_4, 
\end{align*}
with $(v_1,v_2,v_3,v_4)\in \cV_5$ is bounded by $C$, we get
\begin{align*}
 \sum_{(v_1,v_2,_3,v_4)\in \cV_5}\frac{(q,v_1-v_4)(q,v_2-v_4)}{|(v_1-v_4)(v_2-v_4)|}&\ll C\sum_{M_1,M_2 \le C}\frac{(q,M_1)(q,M_2)}{M_1M_2}\\ &\ll C^{1+o(1)}q^{o(1)},
\end{align*}
so that
\begin{align*}
&\sum_{(v_1,v_2,_3,v_4)\in \cV_5}\left|\int_{A}^{B}\left(\frac{(x+v_1)(x+v_2)}{(x+v_3)(x+v_4)} \right)^{it}dx \right|\left|\sum_{\lambda=1}^{q}\chi \left(\frac{(\lambda+v_1)(\lambda+v_2)}{(\lambda+v_3)(\lambda+v_4)} \right)  \right|\ll \\ 
& \quad \quad \quad \quad \frac{Cq^{1/2+o(1)}V^{4+o(1)}}{t}.
\end{align*}
Considering $\cV_3$, since
$$(v_1-v_3)(v_1-v_4)(v_2-v_3)(v_2-v_4)=0,$$
we have by symmetry,
 \begin{align*}
&\sum_{(v_1,v_2,_3,v_4)\in \cV_3}\left|\int_{A}^{B}\left(\frac{(x+v_1)(x+v_2)}{(x+v_3)(x+v_4)} \right)^{it}dx \right|\left|\sum_{\lambda=1}^{q}\chi \left(\frac{(\lambda+v_1)(\lambda+v_2)}{(\lambda+v_3)(\lambda+v_4)} \right)  \right| \\ & \ll
\quad V^2\sum_{\substack{V/2<v_1,v_4 \le V/2+C \\ v_4<v_1}}\left|\int_{A}^{B}\left(\frac{x+v_1}{x+v_4} \right)^{it}dx \right|\left|\sum_{\lambda=1}^{q}\chi \left(\frac{\lambda+v_1}{\lambda+v_4} \right)  \right|.
\end{align*}
Using~\cite[Equation~3.5]{IwKow} and~\cite[Equation~12.51]{IwKow} we have
\begin{align*}
\sum_{\lambda=1}^{q}\chi \left(\frac{\lambda+v_1}{\lambda+v_4} \right)=\sum_{\substack{\lambda=1 \\ (\lambda,q)=1}}^{q}e^{2\pi i (v_1-v_2)\lambda/q} \ll (q,v_1-v_2),
\end{align*}
so that by Lemma~\ref{int bound 2} 
\begin{align*}
&\sum_{\substack{V/2<v_1,v_4 \le V_/2+C \\ v_4<v_1}}\left|\int_{A}^{B}\left(\frac{x+v_1}{x+v_4} \right)^{it}dx \right|\left|\sum_{\lambda=1}^{q}\chi \left(\frac{\lambda+v_1}{\lambda+v_4} \right)  \right| \\ & \quad \quad \quad \quad \quad \ll \frac{V^2}{t}\sum_{v_4<v_1\le C}\frac{(q,v_1-v_4)}{v_1-v_4} \le \frac{Cq^{o(1)}V^{2+o(1)}}{t},
\end{align*}
hence we get
 \begin{align*}
& \sum_{(v_1,v_2,_3,v_4)\in \cV_3}\left|\int_{A}^{B}\left(\frac{(x+v_1)(x+v_2)}{(x+v_3)(x+v_4)} \right)^{it}dx \right|\left|\sum_{\lambda=1}^{q}\chi \left(\frac{(\lambda+v_1)(\lambda+v_2)}{(\lambda+v_3)(\lambda+v_4)} \right)  \right|\\ & \quad \quad \quad \quad \quad \quad \ll  \frac{Cq^{o(1)}V^{4+o(1)}}{t}.
\end{align*}
Combining the estimates for $\cV_1,\cV_3,\cV_4,\cV_5$ gives
\begin{align}
\label{BBBBB}
&\int_{A}^{B}\sum_{\lambda=1}^{q}\left|\sum_{V/2<v\le V/2+C}\chi(\lambda+v)(x+v)^{it}e^{2\pi i \alpha v} \right|^4dx\le \nonumber \\ &
\quad \quad \quad \quad  C\left(qV^2+\frac{q^{1/2+o(1)}V^{4+o(1)}}{t^{1/2}}\right)(qV)^{o(1)}.
\end{align}
Next we use~\eqref{BBBBB} as in the argument of~\cite[Section~5]{HB2} to bound
$$\int_{A}^{B}\sum_{\lambda=1}^{q}\max_{V/2<Q\le V}\left|\sum_{V/2<v\le Q}\chi(\lambda+v)(x+v)^{it}e^{2\pi i \alpha v} \right|^4dx.$$
For each $1\le \lambda \le q$ and $A\le x \le B$,  let $Q_{\lambda,x}$ be the integer defined by 
\begin{align*}
& \max_{V/2<Q\le V}\left|\sum_{V/2<v\le Q}\chi(\lambda+v)(x+v)^{it}e^{2\pi i \alpha v} \right|^4= \\ &
\quad \quad \quad \quad 
\left|\sum_{V/2<v\le V/2+ Q_{\lambda,x}}\chi(\lambda+v)(x+v)^{it}e^{2\pi i \alpha v} \right|^4,
\end{align*}
and let 
$$Q_{\lambda,x}=\sum_{r \le R}\delta_{\lambda,x}(r)2^r,$$
be the binary expansion of $Q_{\lambda,x}$, where $R$ is the largest integer such that $2^R\le V$ and we define
$$s_{\lambda,x}(r)=\sum_{r<t\le R}\delta_{\lambda,x}(t)2^{t-r}.$$
 Then writing
$$H_{\lambda,x}(C,D)=\sum_{D<v\le D+C}\chi(\lambda+v)(x+v)^{it}e^{2\pi i \alpha v},$$
we have as in~\cite{HB2}
$$H_{\lambda,x}(Q_{\lambda,x},V/2)=\sum_{r\le R}\delta_{\lambda,x}(r)H_{\lambda,x}(2^r,V/2+s_{\lambda,x}(r)2^r).$$
By H\"{o}lder's inequality
\begin{align*}
|H_{\lambda,x}(Q(\lambda,x),V/2)|^4&\le R^3\left(\sum_{r\le R}\delta_{\lambda,x}(r)|H_{\lambda,x}(2^r,V/2+s_{\lambda,x}(r)2^r)|^4\right) \\
&\le V^{o(1)}\sum_{r\le R}|H_{\lambda,x}(2^r,V/2+s_{\lambda,x}(r)2^r)|^4,
\end{align*}
and since  $s(r,\lambda,x)\le 2^{R-r}$, we have
$$|H_{\lambda,x}(Q(\lambda,x),V/2)|^4\le V^{o(1)}\sum_{r\le R}\sum_{s\le 2^{R-r}}|H_{\lambda,x}(2^r,V/2+s2^r)|^4.$$
Hence by~\eqref{BBBBB}
\begin{align*}
& \int_{A}^{B}\sum_{\lambda=1}^{q}\max_{V/2<Q\le V}\left|\sum_{V/2<v\le Q}\chi(\lambda+v)(x+v)^{it}e^{2\pi i \alpha v} \right|^4dx\le \\
& \quad \quad \quad  q^{o(1)}\sum_{r\le R}\sum_{s\le 2^{R-r}}\int_{A}^{B}\sum_{\lambda=1}^{q}|H_{\lambda,x}(2^r,V/2+s2^r)|^4dx \le \\ & \quad \quad \quad \quad  \left(qV^2+\frac{q^{1/2+o(1)}V^{4+o(1)}}{t^{1/2}}\right)(qV)^{o(1)}\sum_{r\le R}\sum_{s\le 2^{R-r}}2^r,
\end{align*}
so that
$$\sum_{r\le R}\sum_{s\le 2^{R-r}}2^r\le \sum_{r\le R}2^R\le 2^RV^{o(1)},$$
which gives
\begin{align*}
& \int_{A}^{B}\sum_{\lambda=1}^{q}\max_{V/2<Q\le V}\left|\sum_{V/2<v\le Q}\chi(\lambda+v)(x+v)^{it}e^{2\pi i \alpha v} \right|^4dx\le \\
& \quad \quad \quad \left(qV^3+\frac{q^{1/2+o(1)}V^{5+o(1)}}{t^{1/2}}\right)(qV)^{o(1)}.
\end{align*}
\end{proof}
\section{Proof of Theorem~\ref{main1}}
We begin with some ideas from the proof of~\cite[Theorem~1]{FI}. Let 
$$f(x)=\begin{cases}\min(x-M,1,M+N-x), \quad \text{if} \quad M\le x \le M+N, \\ 
0, \quad \quad \quad \quad \quad \quad \quad \quad \quad \quad \quad \quad \    \text{otherwise}, \end{cases}$$
so that $f(x)$ is a continuous function equal to $1$ for integers $M<n\le M+N$ and $0$ otherwise, hence 
$$\sum_{M<n\le M+N}\chi(n)n^{it}=\sum_{M-N<n\le M+N}f(n)\chi(n)n^{it}.$$
We define the integers 
\begin{equation}
\label{U,V}
U= \lfloor Nt^{-1/4}q^{-1/4} \rfloor, \quad V=\lfloor t^{1/4}q^{1/4} \rfloor,
\end{equation}
and the sets 
$$\cU=\{ \  U/2<u \le U, \ \ \  (u,q)=1 \  \}, \quad \cV=\{ \  V/2<v\le V \ \},$$
so by assumption on $N$ and $t$ we have $U, V\ge 1$ and $UV\le N$. Since $$ N \ge UV,$$ we have
$$\sum_{M<n\le M+N}\chi(n)n^{it}=\frac{1}{\# \cU \#\cV}\sum_{M-N<n\le M+N}\sum_{u\in \cU}\sum_{v\in \cV}f(n+uv)\chi(n+uv)(n+uv)^{it}.$$
As in~\cite{FI}, let $g(y)$ denote the Fourier transform of $f(x)$, so that 
$$f(x)=\int_{-\infty}^{\infty}g(y)e^{-2\pi i xy}dy,$$
and
\begin{align*}
&\sum_{M<n\le M+N}\chi(n)n^{it}= \\  & \frac{1}{\# \cU \#\cV}\sum_{M-N<n\le M+N}\sum_{u\in \cU}\int_{-\infty}^{\infty}g(y)\left(\sum_{v\in \cV}\chi(n+uv)(n+uv)^{it}e^{2\pi i (n+uv)y}\right)dy.
\end{align*}
The change of variable $x=uy$ in the above integral gives
\begin{align*}
& \left|\sum_{M<n\le M+N}\chi(n)n^{it} \right|\le \\ &  \frac{1}{\# \cU \#\cV}\sum_{M-N<n\le M+N}\sum_{u\in \cU}\int_{-\infty}^{\infty}\frac{1}{u}\left|g\left(\frac{y}{u}\right)\right|\left|\sum_{v\in \cV}\chi(nu^{*}+v)(nu^{-1}+v)^{it}e^{2\pi i vy}\right|dy,
\end{align*}
where $u^{*}$ denotes the multiplicative inverse of $u \pmod q$.
As in~\cite[Theorem~2]{FI}, we have
$$\frac{1}{u}\left|g\left(\frac{y}{u}\right)\right|\ll \min \left(N,\frac{1}{|y|},\frac{U}{|y|^2}\right),$$
so that
\begin{align*}
& \left|\sum_{M<n\le M+N}\chi(n)n^{it} \right|\le \\ &  \frac{1}{\# \cU \#\cV}\int_{-\infty}^{\infty}\min \left(N,\frac{1}{|y|},\frac{U}{|y|^2}\right) \times \\ & \quad \quad \quad \sum_{M-N<n\le M+N}\sum_{u\in \cU}\left|\sum_{v\in \cV}\chi(nu^{*}+v)(nu^{-1}+v)^{it}e^{2\pi i vy}\right|dy.
\end{align*}
Let $\alpha$ be defined by
\begin{align*}
&\max_{y \in \mathbb{R}}\sum_{M-N<n\le M+N}\sum_{u\in \cU}\left|\sum_{v\in \cV}\chi(n+uv)(n+uv)^{it}e^{2\pi i vy}\right| \\ & \quad \quad \quad \quad \quad =\sum_{M-N<n\le M+N}\sum_{u\in \cU}\left|\sum_{v\in \cV}\chi(n+uv)(n+uv)^{it}e^{2\pi i v\alpha}\right|,
\end{align*}
so that
\begin{align*}
& \left|\sum_{M<n\le M+N}\chi(n)n^{it} \right|\le \\ &  \left(\int_{-\infty}^{\infty}\min \left(N,\frac{1}{|y|},\frac{U}{|y|^2}\right)dy\right) \times \\ & \quad \quad \quad   \frac{1}{\# \cU \#\cV}\sum_{M-N<n\le M+N}\sum_{u\in \cU}\left|\sum_{v\in \cV}\chi(nu^{*}+v)(nu^{-1}+v)^{it}e^{2\pi i v\alpha}\right|.
\end{align*}
Since 
$$\int_{-\infty}^{\infty}\min \left(N,\frac{1}{|y|},\frac{U}{|y|^2}\right)dy \le (qt)^{o(1)},$$
we get
\begin{align*}
& \left|\sum_{M<n\le M+N}\chi(n)n^{it} \right|\le \\ &  \frac{(qt)^{o(1)}}{\# \cU \#\cV}\sum_{M-N<n\le M+N}\sum_{u\in \cU}\left|\sum_{v\in \cV}\chi(nu^{*}+v)(nu^{-1}+v)^{it}e^{2\pi i v\alpha}\right|.
\end{align*}
Let 
\begin{equation}
\label{W def}
W=\sum_{M-N<n\le M+N}\sum_{u\in \cU}\left|\sum_{v\in \cV}\chi(nu^{*}+v)(nu^{-1}+v)^{it}e^{2\pi i v\alpha}\right|,
\end{equation}
and
\begin{equation}
\label{H}
H= tV^{-1}. 
\end{equation}
For integer $h$, we consider the intervals
$$I_{h}=\left [ \frac{h}{H}, \frac{h+1}{H} \right),$$
and define the sets $\Omega_{h}$ by
$$\Omega_{h}=\{ (n,u) : M-N<n\le M+N, \ \ u\in \cU, \ \  \frac{n}{u}\in I_h \  \},$$
so that by assumption on $M$, $\Omega_h$ is empty for $h<0$ and $h>6HN/U$. Using ideas from~\cite{HB2}, let 
$$F(n,u,x)=\left(\sum_{V/2<v\le V}\chi(nu^{*}+v)(x+v)^{it}e^{2\pi i v\alpha}\right)^4,$$
so that we may rewrite~\eqref{W def} as 
\begin{equation}
\label{W def1}
W=\sum_{M-N<n\le M+N}\sum_{u\in \cU}|F(n,u,nu^{-1}|^{1/4}.
\end{equation}
If $nu^{-1}\in \Omega_h$, by Lemma~\ref{sg} we have
\begin{align}
\label{sga}
|F(n,u,nu^{-1})|\le H\int_{h/H}^{(h+1)/H}|F(n,u,x)|dx+ \int_{h/H}^{(h+1)/H}|F'(n,u,x)|dx.
\end{align}
Let $I_{h}(\lambda)$ denote the number of solutions to the congruence
$$nu^{*}\equiv \lambda \pmod q, \quad (n,u)\in \Omega_h,$$
and writing
\begin{equation}
\label{G def def}
G(\lambda,x)= \sum_{V/2<v\le V}\chi(\lambda+v)(x+v)^{it}e^{2\pi i v\alpha},
\end{equation}
 we have by~\eqref{W def1} and~\eqref{sga},
\begin{align}
\label{W and G} 
\nonumber &|W|\ll \sum_{h\le 6HN/U}\sum_{\lambda=1}^{q}I_h(\lambda)\left(H\int_{h/H}^{(h+1)/H}\left|G(\lambda,x)\right|^4dx \right)^{1/4} \\ & \quad \quad \quad + \sum_{h\le 6HN/U}\sum_{\lambda=1}^{q}I_h(\lambda)\left(\int_{h/H}^{(h+1)/H}\left|G'(\lambda,x)\right|\left|G(\lambda,x)\right|^3dx \right)^{1/4}.
\end{align}
By H\"{o}lder's inequality
\begin{align*}
\int_{h/H}^{(h+1)/H}\left|G'(\lambda,x)\right|\left|G(\lambda,x)\right|^3dx &\le \left(\int_{h/H}^{(h+1)/H}\left|G(\lambda,x)\right|^4dx\right)^{3/4} \times \\ & \quad \quad \quad \quad \left(\int_{h/H}^{(h+1)/H}\left|G'(\lambda,x)\right|^4dx\right)^{1/4},
\end{align*}
and since
$$|G'(\lambda,x)|= t\left|\sum_{V/2<v\le V}\chi(\lambda+v)(x+v)^{it-1}e^{2\pi i v\alpha}\right|,$$
we have by partial summation
\begin{align}
\label{G partial}
|G'(\lambda,x)|\le tV^{-1}\max_{V/2<Q\le V}\left|\sum_{V/2<v\le Q}\chi(\lambda+v)(x+v)^{it}e^{2\pi i v\alpha} \right| \quad  \text{for} \quad x\ge 0.
\end{align}
Hence by~\eqref{H},~\eqref{G def def},~\eqref{W and G} and~\eqref{G partial}
\begin{align*}
|W|\ll &\left(\frac{t}{V}\right)^{1/4}\sum_{h\le 6HN/U} \\ &   \quad \sum_{\lambda=1}^{q}I_h(\lambda)\left(\int_{h/H}^{(h+1)/H}\max_{V/2<Q\le V}\left|\sum_{V/2<v\le Q}\chi(\lambda+v)(x+v)^{it}e^{2\pi i v\alpha} \right|^4dx \right)^{1/4}.
\end{align*}
Two applications of H\"{o}lder's inequality gives,
\begin{align*}
|W|^{4} & \le \frac{t}{V}\left(\sum_{h\le 6NH/U}\sum_{\lambda=1}^{q}I_{h}(\lambda)\right)^{2}\left(\sum_{h\le 6NH/U}\sum_{\lambda=1}^{q}I_{h}(\lambda)^2\right) \times \\ & \quad \quad \quad \left(\sum_{\lambda=1}^{q}\int_{0}^{6N/U}\max_{V/2<Q\le V}\left|\sum_{V/2<v\le Q}\chi(\lambda+v)(x+v)^{it}e^{2\pi i v\alpha} \right|^4dx  \right).
\end{align*}
Since  
$$\sum_{h\le 6NH/U}\sum_{\lambda=1}^{q}I_{h}(\lambda),$$
is equal to the number of solutions to the congruence
$$nu^{*}\equiv \lambda \pmod q,$$
with
$$ u\in  \cU, \  M-N<n\le M+N, \  1\le \lambda \le q,$$
 we have 
\begin{equation}
\label{B1}
\sum_{h\le 6NH/U}\sum_{\lambda=1}^{q}I_{h}(\lambda)\le NU.
\end{equation}
The term
$$\sum_{h\le 6NH/U}\sum_{\lambda=1}^{q}I_{h}(\lambda)^2,$$
is equal to the number of solutions to the congruence
$$n_1u_1 \equiv n_2u_2 \pmod q,$$
with 
$$u_1, u_2 \in \cU, \  \  M-N<n_1,n_2 \le M+N,$$
so that from Lemma~\ref{congruence} we have,
\begin{equation}
\label{B2}
\sum_{h\le 6NH/U}\sum_{\lambda=1}^{q}I_{h}(\lambda)^2 \le NUq^{o(1)}.
\end{equation}
From~\eqref{B1} and~\eqref{B2} we get
\begin{align*}
\label{aaa}
|W|^{4}\le \frac{t}{V}(NU)^{3}q^{o(1)}\left(\sum_{\lambda=1}^{q}\int_{0}^{6N/U}\max_{V/2<Q\le V}\left|\sum_{V/2 <v\le Q}\chi(\lambda+v)(x+v)^{it}e^{2\pi i \alpha v} \right|^4dx  \right).
\end{align*}
By Lemma~\ref{double mean value} 
\begin{align*}
&\sum_{\lambda=1}^{q}\int_{0}^{6N/U}\max_{V/2<Q\le V}\left|\sum_{V/2<v\le Q}\chi(\lambda+v)(x+v)^{it}e^{2\pi i \alpha v} \right|^4dx\ll \\ & \quad \quad \quad \quad \quad \quad  V\left(V^2q+q^{1/2}t^{-1/2} V^4\right)(qt)^{o(1)}, 
\end{align*}
so that
$$|W|^{4}\le t(NU)^{3}\left(V^2q+q^{1/2}t^{-1/2} V^4\right)(qt)^{o(1)},$$
which gives
\begin{align*}
 \left|\sum_{M<n\le M+N}\chi(n)n^{it}\right|&\le t^{1/4}N^{3/4}U^{-1/4}\left(q^{1/4}V^{-1/2}+q^{1/8}t^{-1/8} \right)(qt)^{o(1)}.
\end{align*}
Recalling the choices of $U$ and $V$ in~\eqref{U,V} gives
\begin{align*}
 \left|\sum_{M<n\le M+N}\chi(n)n^{it}\right|&\le  N^{1/2}(qt)^{3/16+o(1)}.
\end{align*}

\end{document}